\newcommand{\blue}[1]{\textcolor{blue}{#1}}
\newcommand{\red}[1]{\textcolor{red}{#1}}
\newcommand{\cyan}[1]{\textcolor{cyan}{#1}}
\theoremstyle{definition}
\newtheorem{definition}{Definition}
\newtheorem{remark}[definition]{Remark}
\newtheorem{example}[definition]{Example}
\theoremstyle{plain}
\newtheorem{theorem}[definition]{Theorem}
\newtheorem{proposition}[definition]{Proposition}
\newcommand{\R}{\mathbb{R}}
\renewcommand{\SS}{\mathbb{S}}
\newcommand{\E}{{\mathbb E}}
\renewcommand{\P}{{\mathbb P}}
\newcommand{\Q}{{\mathbf Q}}
\renewcommand{\H}{{\mathbf H}}
\newcommand{\hmg}{{\mathcal H}}
\newcommand{\eps}{\varepsilon}
\newcommand{\salg}{\mathfrak{K}}
\newcommand{\dsim}{\stackrel{d}{\sim}}
\newcommand{\thf}{\frac{1}{2}}
\newcommand{\Sphere}{\mathbb{S}}
\newcommand{\Z}{\mathbb{Z}}
\newcommand{\gU}{\mathsf{U}}
\newcommand{\fb}{f_{\mathrm{b}}}
\newcommand{\fbo}{f_{\mathrm{b}}^{\mathrm{o}}}
\newcommand{\kappat}{\tilde{\kappa}}
\newcommand{\EE}{\mathbb{E}}
\DeclareMathOperator{\onev}{{\mathbf{1}}}
\newcommand{\imagi}{\boldsymbol{\imath}}
\newcommand{\HH}{\mathbb{H}}
\newcommand{\sH}{\mathcal{H}}
\newcommand{\sE}{\mathcal{E}}
\newcommand{\sI}{\mathcal{I}}
\newcommand{\sT}{\mathcal{T}}
\newcommand{\aslone}{\text{a.s. and in } L^1 \text{ as } n \to \infty}
\newcommand{\blambda}{\boldsymbol{\lambda}}
\newcommand{\balpha}{\boldsymbol{\alpha}}
\DeclareMathOperator{\dist}{dist}
\DeclareMathOperator{\poisson}{Po}
\DeclareMathOperator{\tpoisson}{\widetilde{Po}}
\DeclareMathOperator{\Exp}{Exp}
\DeclareMathOperator{\gibbs}{Gibbs}
\DeclareMathOperator{\supp}{supp}
\DeclareMathOperator*{\esssup}{ess\,sup}
\newcommand{\eqinlaw}{\text{\raisebox{0pt}[0pt][0pt]{${}\stackrel{\mathscr{D}}{=}{}$}}}
\newcommand{\dst}{\displaystlye}
\newcommand{\tst}{\textstyle}
\newcommand{\sst}{\scriptstyle}
\newcommand{\ssst}{\scriptscriptstyle}
\newcommand{\tsum}{{\tst \sum}}
\newcommand{\dd}{\mathrm{d}}
\newcommand{\bs}{\boldsymbol}
\newcommand{\BB}{\mathbb{B}}
\newcommand{\NN}{\mathbb{N}}
\newcommand{\PP}{\mathbb{P}}
\newcommand{\RR}{\mathbb{R}}
\newcommand{\Rplus}{\mathbb{R}_{+}}
\newcommand{\Zplus}{\mathbb{Z}_{+}}
\newcommand{\mcf}{\mathcal{F}}
\newcommand{\mcx}{\mathcal{X}}
\newcommand{\dtv}{d_{{\mathrm TV}}}
\newcommand{\dzero}{d}
\newcommand{\done}{d_1}
\newcommand{\dtwo}{d_2}
\newcommand{\dtwobar}{\overline{\hspace*{-1.5pt}d\hspace*{1.5pt}}\hspace*{-1.5pt}_2}
\newcommand{\msa}{\mathscr{A}}
\newcommand{\msd}{\mathscr{D}}
\newcommand{\msl}{\mathscr{L}}
\newcommand{\bphi}{\breve{\phi}}
\newcommand{\bara}{\bar{a}}
\newcommand{\barb}{\bar{b}}
\newcommand{\bard}{\bar{d}}
\newcommand{\barlam}{\bar{\lambda}}
\newcommand{\abs}[1]{\lvert #1 \rvert}
\newcommand{\bigabs}[1]{\bigl| #1 \bigr|}
\newcommand{\Bigabs}[1]{\Bigl| #1 \Bigr|}
\newcommand{\biggabs}[1]{\biggl| #1 \biggr|}
\newcommand{\Biggabs}[1]{\Biggl| #1 \Biggr|}
\newcommand{\norm}[1]{\lVert #1 \rVert}
\newcommand{\bignorm}[1]{\bigl\lVert #1 \bigr\rVert}
\newcommand{\Bignorm}[1]{\Bigl\lVert #1 \Bigr\rVert}
\newcommand{\mca}{\mathcal{A}}
\newcommand{\mcb}{\mathcal{B}}
\newcommand{\mcn}{\mathcal{N}}
\newcommand{\mcw}{\mathcal{W}}
\newcommand{\mfn}{\mathfrak{N}}
\newcommand{\tf}{\tilde{f}}
\newcommand{\tilh}{\tilde{h}}
\newcommand{\tu}{\tilde{u}}
\newcommand{\tbalpha}{\widetilde{\balpha}}
\newcommand{\tlambda}{\tilde{\lambda}}
\newcommand{\tnu}{\tilde{\nu}}
\newcommand{\txi}{\tilde{\xi}}
\newcommand{\tXXi}{\widetilde{\XXi}}
\newcommand{\tEta}{\widetilde{\Eta}}
\newcommand{\tmcx}{\widetilde{\mcx}}
\newcommand{\ftv}{\mathcal{F}_{TV}}
\newcommand{\fone}{\mathcal{F}_{1}}
\newcommand{\ftwo}{\mathcal{F}_{2}}
\newcommand{\XXi}{\Xi}
\newcommand{\ssXXi}{\sst {\Xi \hspace*{-0.25em}\rule{0.02em}{1.0ex}\hspace*{0.22em}}}
\newcommand{\Eta}{\mathrm{H}}
\newcommand{\mvert}{\, \vert \,}
\newcommand{\one}{\mathbbm{1}}
\newcommand{\RD}{\RR^D}
\newcommand{\hbit}{\hspace*{1.5pt}}
\newcommand{\nhbit}{\hspace*{-1.5pt}}
\newcommand{\Leb}{\mathrm{Leb}}
\newcommand*{\bi}{\textbf{S}}
\newcommand*{\SSS}{{\upshape{\textbf{(S)}}}\xspace}
\newcommand*{\RS}{{\upshape{\textbf{(RS)}}}\xspace}
\newcommand*{\UB}{{\upshape{\textbf{(UB)}}}\xspace}
\newcommand*{\RC}{{\upshape{\textbf{(RC)}}}\xspace}
\newcommand*{\IR}{{\upshape{\textbf{(IR)}}}\xspace}
\DeclareMathOperator{\pip}{PIP}
\numberwithin{equation}{section}
\numberwithin{definition}{section}
\newlength{\querylen}
\newcommand{\query}[1]{\medskip \noindent
  \shadowbox{\begin{minipage}[t]{\querylen} {#1}
    \end{minipage}}\medskip}
\begin{document}

\title{Bounds for the probability generating functional of a Gibbs point process\footnote{Supported by Swiss
    National Science Foundation Grant 200021-137527.}}

\author{Kaspar Stucki\footnote{Institute of Mathematical Statistics and Actuarial Science, University of Bern, Alpeneggstrasse 22, 3012 Bern, Switzerland.}\ \footnote{Email address: kaspar.stucki@stat.unibe.ch} \hspace*{0pt} and Dominic Schuhmacher\footnotemark[2]\ \footnote{Current address: Institute for Mathematical Stochastics, University of G\"ottingen, Goldschmidtstra{\ss}e 7, 37077 G\"ottingen, Germany}\ \footnote{Email address: dominic.schuhmacher@mathematik.uni-goettingen.de}\\
University of Bern}

\date{18 April 2013}
\maketitle

\begin{abstract}
We derive explicit lower and upper bounds for the probability generating functional of a stationary locally stable Gibbs point process, which can be applied to summary statistics like the $F$ function. For pairwise interaction processes we obtain further estimates for the $G$ and $K$ functions, the intensity and higher order correlation functions.

The proof of the main result is based on Stein's method for Poisson point process approximation.  
\vspace{.3 cm}

\noindent {\bf Keywords:} Gibbs process, probability generating functional, intensity, correlation function, $F$,$G$ and $K$ function, Poisson saddlepoint approximation, Stein's method.

\vspace{.3 cm}

\noindent {\bf AMS 2010 Subject Classification:} Primary 60G55; secondary 62M30.
\end{abstract}

\section{Introduction}
\label{sec:introduction}

Gibbs processes are very popular point process models which are extensively used both in spatial statistics and in statistical physics, see e.g.\ \citep{moellerwaage04,ruelle69}. Especially the pairwise interaction processes allow a simple yet flexible modelling of point interactions. However, a major drawback of Gibbs processes is that in general there are no analytic formulas available for their intensities or higher order correlation functions. In a recent couple of articles \citep{baddeleynair12, bn12} Baddeley and Nair proposed an approximation method that is fast to compute and accurate as verified by Monte Carlo methods. There are however no theoretical results in this respect and hence no guarantees for accuracy in most concrete models nor quantifications of the approximation error.

The aim of the present paper is to derive rigorous lower and upper bounds for correlation functions and related quantities. These allow us to narrow down the true values quite precisely if the Gibbs process is not too far away from a Poisson process. Figure~\ref{fig:Strauss50} shows our bounds on the intensity for a two-dimensional Strauss process in dependence of its interaction parameter~$\gamma$. The pluses are estimates of the true intensity obtained as averages over the numbers of points in $[0,1]^2$ of 10,000 Strauss processes simulated by dominated coupling from the past. The point processes were simulated on a larger window ($[-0.5,1.5]^2$) in order avoid noticeable edge effects. All simulations and numerical computations in this paper were performed in the R language \citep{r12} using the contributed package \texttt{spatstat} \citep{spatstat12}.

\begin{figure}[ht]
\begin{center}
\begin{tikzpicture}[x=1pt,y=1pt]
\definecolor[named]{fillColor}{rgb}{1.00,1.00,1.00}
\path[use as bounding box,fill=fillColor,fill opacity=0.00] (0,0) rectangle (254.39,231.26);
\begin{scope}
\path[clip] (  0.00,  0.00) rectangle (254.39,231.26);
\definecolor[named]{drawColor}{rgb}{0.00,0.00,0.00}

\path[draw=drawColor,line width= 0.4pt,line join=round,line cap=round] ( 50.11, 28.91) -- (218.74, 28.91);

\path[draw=drawColor,line width= 0.4pt,line join=round,line cap=round] ( 50.11, 28.91) -- ( 50.11, 22.91);

\path[draw=drawColor,line width= 0.4pt,line join=round,line cap=round] ( 83.83, 28.91) -- ( 83.83, 22.91);

\path[draw=drawColor,line width= 0.4pt,line join=round,line cap=round] (117.56, 28.91) -- (117.56, 22.91);

\path[draw=drawColor,line width= 0.4pt,line join=round,line cap=round] (151.29, 28.91) -- (151.29, 22.91);

\path[draw=drawColor,line width= 0.4pt,line join=round,line cap=round] (185.01, 28.91) -- (185.01, 22.91);

\path[draw=drawColor,line width= 0.4pt,line join=round,line cap=round] (218.74, 28.91) -- (218.74, 22.91);

\node[text=drawColor,anchor=base,inner sep=0pt, outer sep=0pt, scale=  0.70] at ( 50.11, 15.71) {0.0};

\node[text=drawColor,anchor=base,inner sep=0pt, outer sep=0pt, scale=  0.70] at ( 83.83, 15.71) {0.2};

\node[text=drawColor,anchor=base,inner sep=0pt, outer sep=0pt, scale=  0.70] at (117.56, 15.71) {0.4};

\node[text=drawColor,anchor=base,inner sep=0pt, outer sep=0pt, scale=  0.70] at (151.29, 15.71) {0.6};

\node[text=drawColor,anchor=base,inner sep=0pt, outer sep=0pt, scale=  0.70] at (185.01, 15.71) {0.8};

\node[text=drawColor,anchor=base,inner sep=0pt, outer sep=0pt, scale=  0.70] at (218.74, 15.71) {1.0};

\path[draw=drawColor,line width= 0.4pt,line join=round,line cap=round] ( 43.36, 35.87) -- ( 43.36,209.85);

\path[draw=drawColor,line width= 0.4pt,line join=round,line cap=round] ( 43.36, 35.87) -- ( 37.36, 35.87);

\path[draw=drawColor,line width= 0.4pt,line join=round,line cap=round] ( 43.36, 64.86) -- ( 37.36, 64.86);

\path[draw=drawColor,line width= 0.4pt,line join=round,line cap=round] ( 43.36, 93.86) -- ( 37.36, 93.86);

\path[draw=drawColor,line width= 0.4pt,line join=round,line cap=round] ( 43.36,122.86) -- ( 37.36,122.86);

\path[draw=drawColor,line width= 0.4pt,line join=round,line cap=round] ( 43.36,151.86) -- ( 37.36,151.86);

\path[draw=drawColor,line width= 0.4pt,line join=round,line cap=round] ( 43.36,180.85) -- ( 37.36,180.85);

\path[draw=drawColor,line width= 0.4pt,line join=round,line cap=round] ( 43.36,209.85) -- ( 37.36,209.85);

\node[text=drawColor,anchor=base east,inner sep=0pt, outer sep=0pt, scale=  0.70] at ( 34.96, 33.46) {20};

\node[text=drawColor,anchor=base east,inner sep=0pt, outer sep=0pt, scale=  0.70] at ( 34.96, 62.45) {25};

\node[text=drawColor,anchor=base east,inner sep=0pt, outer sep=0pt, scale=  0.70] at ( 34.96, 91.45) {30};

\node[text=drawColor,anchor=base east,inner sep=0pt, outer sep=0pt, scale=  0.70] at ( 34.96,120.45) {35};

\node[text=drawColor,anchor=base east,inner sep=0pt, outer sep=0pt, scale=  0.70] at ( 34.96,149.45) {40};

\node[text=drawColor,anchor=base east,inner sep=0pt, outer sep=0pt, scale=  0.70] at ( 34.96,178.44) {45};

\node[text=drawColor,anchor=base east,inner sep=0pt, outer sep=0pt, scale=  0.70] at ( 34.96,207.44) {50};
\end{scope}
\begin{scope}
\path[clip] ( 43.36, 28.91) rectangle (225.48,216.81);
\definecolor[named]{fillColor}{rgb}{0.83,0.83,0.83}

\path[fill=fillColor] ( 50.11,138.76) --
	( 51.79,139.20) --
	( 53.48,139.65) --
	( 55.17,140.09) --
	( 56.85,140.54) --
	( 58.54,141.00) --
	( 60.23,141.46) --
	( 61.91,141.92) --
	( 63.60,142.38) --
	( 65.28,142.85) --
	( 66.97,143.33) --
	( 68.66,143.80) --
	( 70.34,144.28) --
	( 72.03,144.77) --
	( 73.72,145.26) --
	( 75.40,145.75) --
	( 77.09,146.25) --
	( 78.77,146.75) --
	( 80.46,147.25) --
	( 82.15,147.76) --
	( 83.83,148.28) --
	( 85.52,148.79) --
	( 87.21,149.32) --
	( 88.89,149.84) --
	( 90.58,150.37) --
	( 92.26,150.91) --
	( 93.95,151.45) --
	( 95.64,152.00) --
	( 97.32,152.55) --
	( 99.01,153.10) --
	(100.70,153.66) --
	(102.38,154.23) --
	(104.07,154.80) --
	(105.76,155.37) --
	(107.44,155.95) --
	(109.13,156.54) --
	(110.81,157.13) --
	(112.50,157.72) --
	(114.19,158.32) --
	(115.87,158.93) --
	(117.56,159.54) --
	(119.25,160.16) --
	(120.93,160.78) --
	(122.62,161.41) --
	(124.30,162.04) --
	(125.99,162.68) --
	(127.68,163.33) --
	(129.36,163.98) --
	(131.05,164.64) --
	(132.74,165.31) --
	(134.42,165.98) --
	(136.11,166.65) --
	(137.79,167.34) --
	(139.48,168.03) --
	(141.17,168.72) --
	(142.85,169.43) --
	(144.54,170.14) --
	(146.23,170.86) --
	(147.91,171.58) --
	(149.60,172.31) --
	(151.29,173.05) --
	(152.97,173.79) --
	(154.66,174.55) --
	(156.34,175.31) --
	(158.03,176.08) --
	(159.72,176.85) --
	(161.40,177.63) --
	(163.09,178.43) --
	(164.78,179.22) --
	(166.46,180.03) --
	(168.15,180.85) --
	(169.83,181.67) --
	(171.52,182.50) --
	(173.21,183.35) --
	(174.89,184.20) --
	(176.58,185.05) --
	(178.27,185.92) --
	(179.95,186.80) --
	(181.64,187.69) --
	(183.32,188.58) --
	(185.01,189.49) --
	(186.70,190.40) --
	(188.38,191.33) --
	(190.07,192.26) --
	(191.76,193.21) --
	(193.44,194.16) --
	(195.13,195.13) --
	(196.82,196.10) --
	(198.50,197.09) --
	(200.19,198.09) --
	(201.87,199.10) --
	(203.56,200.12) --
	(205.25,201.15) --
	(206.93,202.20) --
	(208.62,203.25) --
	(210.31,204.32) --
	(211.99,205.40) --
	(213.68,206.49) --
	(215.36,207.60) --
	(217.05,208.72) --
	(218.74,209.85) --
	(218.74,209.85) --
	(217.05,208.72) --
	(215.36,207.59) --
	(213.68,206.47) --
	(211.99,205.37) --
	(210.31,204.27) --
	(208.62,203.18) --
	(206.93,202.09) --
	(205.25,201.02) --
	(203.56,199.95) --
	(201.87,198.89) --
	(200.19,197.84) --
	(198.50,196.80) --
	(196.82,195.77) --
	(195.13,194.74) --
	(193.44,193.72) --
	(191.76,192.71) --
	(190.07,191.70) --
	(188.38,190.71) --
	(186.70,189.72) --
	(185.01,188.73) --
	(183.32,187.76) --
	(181.64,186.79) --
	(179.95,185.83) --
	(178.27,184.88) --
	(176.58,183.93) --
	(174.89,182.99) --
	(173.21,182.05) --
	(171.52,181.13) --
	(169.83,180.20) --
	(168.15,179.29) --
	(166.46,178.38) --
	(164.78,177.48) --
	(163.09,176.58) --
	(161.40,175.69) --
	(159.72,174.81) --
	(158.03,173.93) --
	(156.34,173.06) --
	(154.66,172.20) --
	(152.97,171.34) --
	(151.29,170.49) --
	(149.60,169.64) --
	(147.91,168.80) --
	(146.23,167.96) --
	(144.54,167.13) --
	(142.85,166.30) --
	(141.17,165.48) --
	(139.48,164.67) --
	(137.79,163.86) --
	(136.11,163.06) --
	(134.42,162.26) --
	(132.74,161.47) --
	(131.05,160.68) --
	(129.36,159.90) --
	(127.68,159.12) --
	(125.99,158.35) --
	(124.30,157.58) --
	(122.62,156.82) --
	(120.93,156.06) --
	(119.25,155.30) --
	(117.56,154.56) --
	(115.87,153.81) --
	(114.19,153.07) --
	(112.50,152.34) --
	(110.81,151.61) --
	(109.13,150.89) --
	(107.44,150.16) --
	(105.76,149.45) --
	(104.07,148.74) --
	(102.38,148.03) --
	(100.70,147.33) --
	( 99.01,146.63) --
	( 97.32,145.93) --
	( 95.64,145.24) --
	( 93.95,144.56) --
	( 92.26,143.88) --
	( 90.58,143.20) --
	( 88.89,142.53) --
	( 87.21,141.86) --
	( 85.52,141.19) --
	( 83.83,140.53) --
	( 82.15,139.87) --
	( 80.46,139.22) --
	( 78.77,138.57) --
	( 77.09,137.92) --
	( 75.40,137.28) --
	( 73.72,136.64) --
	( 72.03,136.01) --
	( 70.34,135.38) --
	( 68.66,134.75) --
	( 66.97,134.13) --
	( 65.28,133.51) --
	( 63.60,132.89) --
	( 61.91,132.28) --
	( 60.23,131.67) --
	( 58.54,131.06) --
	( 56.85,130.46) --
	( 55.17,129.86) --
	( 53.48,129.27) --
	( 51.79,128.68) --
	( 50.11,128.09) --
	cycle;
\definecolor[named]{drawColor}{rgb}{0.00,0.00,0.00}

\path[draw=drawColor,line width= 0.2pt,line join=round,line cap=round] ( 50.11,138.76) --
	( 51.79,139.20) --
	( 53.48,139.65) --
	( 55.17,140.09) --
	( 56.85,140.54) --
	( 58.54,141.00) --
	( 60.23,141.46) --
	( 61.91,141.92) --
	( 63.60,142.38) --
	( 65.28,142.85) --
	( 66.97,143.33) --
	( 68.66,143.80) --
	( 70.34,144.28) --
	( 72.03,144.77) --
	( 73.72,145.26) --
	( 75.40,145.75) --
	( 77.09,146.25) --
	( 78.77,146.75) --
	( 80.46,147.25) --
	( 82.15,147.76) --
	( 83.83,148.28) --
	( 85.52,148.79) --
	( 87.21,149.32) --
	( 88.89,149.84) --
	( 90.58,150.37) --
	( 92.26,150.91) --
	( 93.95,151.45) --
	( 95.64,152.00) --
	( 97.32,152.55) --
	( 99.01,153.10) --
	(100.70,153.66) --
	(102.38,154.23) --
	(104.07,154.80) --
	(105.76,155.37) --
	(107.44,155.95) --
	(109.13,156.54) --
	(110.81,157.13) --
	(112.50,157.72) --
	(114.19,158.32) --
	(115.87,158.93) --
	(117.56,159.54) --
	(119.25,160.16) --
	(120.93,160.78) --
	(122.62,161.41) --
	(124.30,162.04) --
	(125.99,162.68) --
	(127.68,163.33) --
	(129.36,163.98) --
	(131.05,164.64) --
	(132.74,165.31) --
	(134.42,165.98) --
	(136.11,166.65) --
	(137.79,167.34) --
	(139.48,168.03) --
	(141.17,168.72) --
	(142.85,169.43) --
	(144.54,170.14) --
	(146.23,170.86) --
	(147.91,171.58) --
	(149.60,172.31) --
	(151.29,173.05) --
	(152.97,173.79) --
	(154.66,174.55) --
	(156.34,175.31) --
	(158.03,176.08) --
	(159.72,176.85) --
	(161.40,177.63) --
	(163.09,178.43) --
	(164.78,179.22) --
	(166.46,180.03) --
	(168.15,180.85) --
	(169.83,181.67) --
	(171.52,182.50) --
	(173.21,183.35) --
	(174.89,184.20) --
	(176.58,185.05) --
	(178.27,185.92) --
	(179.95,186.80) --
	(181.64,187.69) --
	(183.32,188.58) --
	(185.01,189.49) --
	(186.70,190.40) --
	(188.38,191.33) --
	(190.07,192.26) --
	(191.76,193.21) --
	(193.44,194.16) --
	(195.13,195.13) --
	(196.82,196.10) --
	(198.50,197.09) --
	(200.19,198.09) --
	(201.87,199.10) --
	(203.56,200.12) --
	(205.25,201.15) --
	(206.93,202.20) --
	(208.62,203.25) --
	(210.31,204.32) --
	(211.99,205.40) --
	(213.68,206.49) --
	(215.36,207.60) --
	(217.05,208.72) --
	(218.74,209.85);

\path[draw=drawColor,line width= 0.2pt,line join=round,line cap=round] ( 50.11,128.09) --
	( 51.79,128.68) --
	( 53.48,129.27) --
	( 55.17,129.86) --
	( 56.85,130.46) --
	( 58.54,131.06) --
	( 60.23,131.67) --
	( 61.91,132.28) --
	( 63.60,132.89) --
	( 65.28,133.51) --
	( 66.97,134.13) --
	( 68.66,134.75) --
	( 70.34,135.38) --
	( 72.03,136.01) --
	( 73.72,136.64) --
	( 75.40,137.28) --
	( 77.09,137.92) --
	( 78.77,138.57) --
	( 80.46,139.22) --
	( 82.15,139.87) --
	( 83.83,140.53) --
	( 85.52,141.19) --
	( 87.21,141.86) --
	( 88.89,142.53) --
	( 90.58,143.20) --
	( 92.26,143.88) --
	( 93.95,144.56) --
	( 95.64,145.24) --
	( 97.32,145.93) --
	( 99.01,146.63) --
	(100.70,147.33) --
	(102.38,148.03) --
	(104.07,148.74) --
	(105.76,149.45) --
	(107.44,150.16) --
	(109.13,150.89) --
	(110.81,151.61) --
	(112.50,152.34) --
	(114.19,153.07) --
	(115.87,153.81) --
	(117.56,154.56) --
	(119.25,155.30) --
	(120.93,156.06) --
	(122.62,156.82) --
	(124.30,157.58) --
	(125.99,158.35) --
	(127.68,159.12) --
	(129.36,159.90) --
	(131.05,160.68) --
	(132.74,161.47) --
	(134.42,162.26) --
	(136.11,163.06) --
	(137.79,163.86) --
	(139.48,164.67) --
	(141.17,165.48) --
	(142.85,166.30) --
	(144.54,167.13) --
	(146.23,167.96) --
	(147.91,168.80) --
	(149.60,169.64) --
	(151.29,170.49) --
	(152.97,171.34) --
	(154.66,172.20) --
	(156.34,173.06) --
	(158.03,173.93) --
	(159.72,174.81) --
	(161.40,175.69) --
	(163.09,176.58) --
	(164.78,177.48) --
	(166.46,178.38) --
	(168.15,179.29) --
	(169.83,180.20) --
	(171.52,181.13) --
	(173.21,182.05) --
	(174.89,182.99) --
	(176.58,183.93) --
	(178.27,184.88) --
	(179.95,185.83) --
	(181.64,186.79) --
	(183.32,187.76) --
	(185.01,188.73) --
	(186.70,189.72) --
	(188.38,190.71) --
	(190.07,191.70) --
	(191.76,192.71) --
	(193.44,193.72) --
	(195.13,194.74) --
	(196.82,195.77) --
	(198.50,196.80) --
	(200.19,197.84) --
	(201.87,198.89) --
	(203.56,199.95) --
	(205.25,201.02) --
	(206.93,202.09) --
	(208.62,203.18) --
	(210.31,204.27) --
	(211.99,205.37) --
	(213.68,206.47) --
	(215.36,207.59) --
	(217.05,208.72) --
	(218.74,209.85);

\node[text=drawColor,anchor=base,inner sep=0pt, outer sep=0pt, scale=  0.70] at ( 50.11,129.88) {+};

\node[text=drawColor,anchor=base,inner sep=0pt, outer sep=0pt, scale=  0.70] at ( 58.54,132.55) {+};

\node[text=drawColor,anchor=base,inner sep=0pt, outer sep=0pt, scale=  0.70] at ( 66.97,136.21) {+};

\node[text=drawColor,anchor=base,inner sep=0pt, outer sep=0pt, scale=  0.70] at ( 75.40,139.52) {+};

\node[text=drawColor,anchor=base,inner sep=0pt, outer sep=0pt, scale=  0.70] at ( 83.83,142.40) {+};

\node[text=drawColor,anchor=base,inner sep=0pt, outer sep=0pt, scale=  0.70] at ( 92.26,146.00) {+};

\node[text=drawColor,anchor=base,inner sep=0pt, outer sep=0pt, scale=  0.70] at (100.70,148.24) {+};

\node[text=drawColor,anchor=base,inner sep=0pt, outer sep=0pt, scale=  0.70] at (109.13,152.28) {+};

\node[text=drawColor,anchor=base,inner sep=0pt, outer sep=0pt, scale=  0.70] at (117.56,155.59) {+};

\node[text=drawColor,anchor=base,inner sep=0pt, outer sep=0pt, scale=  0.70] at (125.99,159.26) {+};

\node[text=drawColor,anchor=base,inner sep=0pt, outer sep=0pt, scale=  0.70] at (134.42,162.66) {+};

\node[text=drawColor,anchor=base,inner sep=0pt, outer sep=0pt, scale=  0.70] at (142.85,166.11) {+};

\node[text=drawColor,anchor=base,inner sep=0pt, outer sep=0pt, scale=  0.70] at (151.29,170.42) {+};

\node[text=drawColor,anchor=base,inner sep=0pt, outer sep=0pt, scale=  0.70] at (159.72,174.24) {+};

\node[text=drawColor,anchor=base,inner sep=0pt, outer sep=0pt, scale=  0.70] at (168.15,178.79) {+};

\node[text=drawColor,anchor=base,inner sep=0pt, outer sep=0pt, scale=  0.70] at (176.58,182.46) {+};

\node[text=drawColor,anchor=base,inner sep=0pt, outer sep=0pt, scale=  0.70] at (185.01,187.50) {+};

\node[text=drawColor,anchor=base,inner sep=0pt, outer sep=0pt, scale=  0.70] at (193.44,192.28) {+};

\node[text=drawColor,anchor=base,inner sep=0pt, outer sep=0pt, scale=  0.70] at (201.87,196.71) {+};

\node[text=drawColor,anchor=base,inner sep=0pt, outer sep=0pt, scale=  0.70] at (210.31,203.13) {+};

\node[text=drawColor,anchor=base,inner sep=0pt, outer sep=0pt, scale=  0.70] at (218.74,208.13) {+};
\end{scope}
\begin{scope}
\path[clip] (  0.00,  0.00) rectangle (254.39,231.26);
\definecolor[named]{drawColor}{rgb}{0.00,0.00,0.00}

\path[draw=drawColor,line width= 0.4pt,line join=round,line cap=round] ( 43.36, 28.91) --
	(225.48, 28.91) --
	(225.48,216.81) --
	( 43.36,216.81) --
	( 43.36, 28.91);

\node[text=drawColor,rotate= 90.00,anchor=base,inner sep=0pt, outer sep=0pt, scale=  0.70] at ( 16.96,122.86) {$\lambda$};

\node[text=drawColor,anchor=base,inner sep=0pt, outer sep=0pt, scale=  0.70] at (134.42,  7.31) {$\gamma$};
\end{scope}
\end{tikzpicture}
\caption{\label{fig:Strauss50} Bounds on the intensities of two-dimensional Strauss processes with $\beta=50$, $r=0.05$ and values of $\gamma$ ranging from $0$ to $1$. The pluses are estimates of the intensities based on 10,000 simulations each.}
\end{center}
\end{figure}

Our main result, Theorem~\ref{thm:bounds}, more generally gives bounds on the probability generating functional of a Gibbs process. Let $\Xi$ be an arbitrary point process on $\R^d$. The \emph{probability generating functional (p.g.fl.)} $\Psi_{\Xi}$ is defined as 
\begin{equation}
\label{eq:pgfl}
\Psi_\Xi(g)=\E \Bigl(\prod_{y\in \Xi}g(y)\Bigr)
\end{equation} 
for any measurable function $g\colon \R^d\to [0,1]$ for which $1-g$ has bounded support,
see e.g.\ \cite[p.\,59]{dvj08} for details.

Many statistics of point processes, such as the \emph{empty space function} ($F$ function), contain expectations as in \eqref{eq:pgfl}. For pairwise interaction processes the situation is even better. By the Georgii--Nguyen--Zessin equation~\eqref{eq:gnz} the \emph{nearest neighbour function} ($G$ function), Ripley's $K$ function and the correlation functions of all orders can be rewritten using the p.g.fl.

The idea for proving Theorem~\ref{thm:bounds} is to replace the Gibbs process $\Xi$ in \eqref{eq:pgfl} by a suitable Poisson process and bound the error using Stein's method.

The rest of the paper is organised as follows. In Section~\ref{sec:pre} we introduce some notation and state the main result. In Section~\ref{sec:bounds-int} we provide bounds on the intensity, and in Section~\ref{sec:s-stat}  bounds on other summary statistics are derived. Section~\ref{sec:proofs} contains the proof of the main result.

\section{Preliminaries and main result}
\label{sec:pre}

Let $(\mfn,\mcn)$ denote the space of locally finite point measures on $\R^d$ equipped with the $\sigma$-algebra generated by the evaluation maps $[\mfn\ni\xi\mapsto \xi(A) \in \Zplus]$ for bounded Borel sets $A\in \R^d$. A point process is just a $\mfn$-valued random element. We assume the point processes to be \emph{simple}, i.e.\ do not allow multi-points. Thus we can use set notation, i.e.\ $x\in \xi$ means that the point $x$ lies in the support of the measure $\xi$.
 
In spatial statistics point processes are usually defined on a bounded window $\mcw\subset \R^d$. Let $\mfn\vert_\mcw$ denote the restriction of the $\mfn$ to $\mcw$. A point process $\Xi$ on $\mcw$ is called a \emph{Gibbs process} if it has a hereditary density $u$ with respect to the distribution of the Poisson process with unit intensity. Hereditarity means that $u(\xi)>0$ implies $u(\eta)>0$ for all subconfigurations 
$\eta\subset \xi$. Invoking the Hammersley--Clifford--Ripley--Kelly theorem in \cite[Thm.~6.1]{moellerwaage04}, such a process may be characterised by requiring a density of the form $u(\xi) = \exp \bigl(-\sum_{n=1}^\infty \sum_{\{x_1,\ldots,x_n\} \subset \xi} v(x_1,\ldots,x_n) \bigr)$, where $v \colon \bigl( \R^d \bigr)^n \to (-\infty, \infty]$ are symmetric functions, which yields the definition of a finite Gibbs process from statistical physics. By hereditarity we can define the \emph{conditional intensity} as
\begin{equation}
\label{eq:cond-int}
\lambda(x\mid \xi)=\frac{u(\xi\cup\{x\})}{u(\xi)},
\end{equation}
where $0/0=0$. Roughly speaking, the conditional intensity is the infinitesimal probability that $\Xi$ has a point at $x$, given that $\Xi$ coincides with the configuration $\xi$ everywhere else. Furthermore $\lambda(\cdot\mvert \cdot)$ uniquely characterises the distribution of $\Xi$, since by \eqref{eq:cond-int} one can recursively recover an unnormalised density. It is well-known that the conditional intensity is the $dx \otimes \msl(\XXi)$-almost everywhere unique product measurable function that satisfies the \emph{Georgii--Nguyen--Zessin equation}, see \citep[p.~95]{moellerwaage04}
\begin{equation}
\label{eq:gnz}
  \EE \biggl( \int_{\mcw} h(x, \XXi\setminus\{x\}) \; \XXi(\dd x) \biggr) = \int_{\mcw} \EE \bigl( h(x, \XXi) \lambda(x \mvert \XXi) \bigr) \; dx
\end{equation}
for every measurable $h \colon \mcw \times \mfn\vert_\mcw \to \Rplus$.

So far $\lambda(\cdot\mvert\cdot)$ is only a function on $\mcw\times \mfn\vert_\mcw$, but in many cases there exists a natural extension to the whole space, which we shall also denote by $\lambda(\cdot\mvert\cdot)$.
One way to generalise Gibbs processes to the whole space $\R^d$ is then by the so-called \emph{integral characterisation}. A point process $\Xi$ on $\R^d$ is a Gibbs process corresponding to the conditional intensity $\lambda(\cdot\mvert \cdot)$ if it satisfies \eqref{eq:gnz} with $\mcw$ replaced by $\RR^d$ for all measurable $h \colon \R^d\times \mfn \to \Rplus$; see \citep[p.\,95]{moellerwaage04}, or \citep{nguyenzessin79} for a more rigorous presentation. Unlike in the case of a bounded domain, $\Xi$ may not be uniquely determined by~\eqref{eq:gnz}. For the rest of this paper we will only deal with the conditional intensity, i.e.\ if we say that a result holds for a Gibbs process with conditional intensity $\lambda(\cdot\mvert \cdot)$, we mean that it holds for \emph{all} processes corresponding to this conditional intensity.

A Gibbs process $\Xi$ is said to be a \emph{pairwise interaction process} if its conditional intensity is of the form
\begin{equation}
\label{eq:pip}
\lambda(x\mvert \xi)=\beta \prod_{y \in \xi}\varphi (x,y)
\end{equation}
for a constant $\beta>0$ and a symmetric \emph{interaction function} $\varphi$. We denote the distribution of $\Xi$ by $\pip(\beta,\varphi)$. The process $\Xi$ is called \emph{inhibitory} if $\varphi \le 1$ and it is said to have a \emph{finite interaction range} if $1-\varphi$ is compactly supported. $\Xi$ is \emph{stationary} if its distribution is invariant under translations, and \emph{isotropic} if it is also invariant under rotations. If $\Xi$ is stationary we tacitly assume that $\varphi(x,y)$ depends only on the difference $x-y$; we then write $\varphi(x,y)=\varphi(x-y)$. For conditions on $\varphi$ ensuring the existence of $\Xi$ we refer the reader to~\citep{ruelle69}.

If $\Xi$ is a general point process, its \emph{expectation measure or first order moment measure} $\E \Xi$ on $\RR^d$ is simply given by $(\E\Xi)(A) = \E(\Xi(A))$ for every Borel set $A \subset \RR^d$. For $k \geq 1$ the \emph{$k$-th order factorial moment measure} of $\Xi$ is the expectation measure of the factorial product measure
\begin{equation*}
  \Xi^{[k]} = \sum_{\substack{X_1,\ldots,X_k \in \Xi \\[2pt] \text{pairwise different}}} \delta_{(X_1,\ldots,X_k)}
\end{equation*}
on $(\RR^d)^k$. Any moment measure is said to \emph{exist} if it is locally finite.

The \emph{intensity (function)} $\lambda(x)$ of a Gibbs process $\Xi$ is the density of the first moment measure of $\Xi$ with respect to Lebesgue measure, provided the first moment measure exists. For a bounded $A\subset \R^d$, Equation~\eqref{eq:gnz} yields
\begin{equation*}
\E \Xi(A)=\int_A \E\big(\lambda(x\mvert \Xi)\big)\; dx,
\end{equation*} 
hence the existence of the intensity and the form $\lambda(x)=\E(\lambda(x\mvert \Xi))$. For stationary processes the intensity is obviously constant and we just write $\lambda$. For a stationary pairwise interaction process we get 
\begin{equation}
\label{eq:lambda-pip}
\lambda=\E\big(\lambda(0\mvert \Xi)\big)=\beta \hbit \E\Bigl(\prod_{y\in \Xi}\varphi(y)\Bigr)=\beta \hbit \Psi_\Xi( \varphi).
\end{equation}

In a similar manner it is possible to obtain the densities of the higher order factorial moment measures, the so-called \emph{correlation functions}; see \citep{mw07,mase90}. For a stationary process $\Xi\sim\pip(\beta,\varphi)$ the $k$-th correlation function is given by
\begin{align}
\lambda_k(x_1,\dots,x_k)&=\beta^k\Bigl(\prod_{1\le i<j\le k}\varphi(x_i-x_j)\Bigr)\hbit \E\Bigl(\prod_{y\in \Xi}\varphi(y-x_1)\cdots\varphi(y-x_k)\Bigr) \nonumber \\
&=\beta^k\Bigl(\prod_{1\le i<j\le k}\varphi(x_i-x_j)\Bigr)\hbit \Psi_\Xi\big(\varphi(\cdot-x_1)\cdots\varphi(\cdot-x_k)\big).
\label{eq:corr-fun}
\end{align}
A frequently used function in spatial statistics is the \emph{pair correlation function} which is defined as
\begin{equation}
\label{eq:pcf}
\rho(x,y)=\frac{\lambda_2(x,y)}{\lambda(x)\lambda(y)}.
\end{equation}
In the stationary isotropic case this simplifies to
$\rho(s)=\lambda_2(x,y)/\lambda^2$, where $s=\|x-y\|$.

For our results we need a stability condition for the Gibbs processes. A Gibbs process $\Xi$ is called \emph{locally stable} if there exists a non-negative function $c^*$ such that $\int_\mcw c^*(x)\,dx < \infty$ for all bounded domains $\mcw\subset \R^d$ and the conditional intensity satisfies
\begin{equation}
\label{eq:loc-s}
\lambda(x\mvert \xi)\le c^*(x),
\end{equation}
for all $\xi \in \mfn$. For stationary Gibbs processes we require $c^*$ to be constant.
Local stability is a frequently used condition in spatial statistics, see \citep[p.~84]{moellerwaage04}. However for $\Xi\sim\pip(\beta,\varphi)$ it implies that either $\Xi$ is inhibitory or it has a \emph{hard core}, i.e.\ there exists an $r_{\min}$ such that $\varphi(x,y)=0$ for all $\abs{x-y}\le r_{\min}$. This excludes for example the \emph{Lennard--Jones process}, which is very popular in statistical phyisics.

The following is the key theorem for obtaining the results in Sections~\ref{sec:bounds-int} and~\ref{sec:s-stat}. Its proof is the subject of Section~\ref{sec:proofs}.
\begin{theorem}
\label{thm:bounds}
Let $\Xi$ be a stationary locally stable Gibbs process with intensity $\lambda$ and local stability constant $c^*$, and let $g\colon \R^d\to [0,1]$ be a function for which $1-g$ has bounded support. Then
\begin{equation}
\label{eq:bounds}
1-\lambda G \le \E\Bigl(\prod_{y\in \Xi}g(y)\Bigr) \le 1-\frac{\lambda}{c^*}\big(1-e^{-c^*G}\big),
\end{equation}
where $G=\int_{\R^d} 1-g(x)\, dx$.
\end{theorem}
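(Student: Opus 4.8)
The plan is to interpolate between the constant function $1$ and $g$, and to track how the p.g.fl.\ changes along the way by means of the Georgii--Nguyen--Zessin equation \eqref{eq:gnz}. Concretely, set $g_t = 1-t(1-g)$ for $t\in[0,1]$, so that $g_0\equiv 1$, $g_1=g$, and $\int_{\R^d}(1-g_t(x))\,dx = tG$. Define $\psi(t)=\Psi_\Xi(g_t)=\E\bigl(\prod_{y\in\Xi}g_t(y)\bigr)$; then $\psi(0)=1$, and the theorem concerns $\psi(1)$. Differentiating the (almost surely finite) product and using that $1-g$ has bounded support to justify interchanging derivative and expectation, I would obtain
\[
\psi'(t) = -\E\Bigl(\sum_{x\in\Xi}(1-g(x))\prod_{y\in\Xi\setminus\{x\}}g_t(y)\Bigr) = -\int_{\R^d}(1-g(x))\,A_t(x)\,dx,
\]
where $A_t(x)=\E\bigl(\lambda(x\mvert\Xi)\prod_{y\in\Xi}g_t(y)\bigr)$ and the second equality is \eqref{eq:gnz} applied to $h(x,\xi)=(1-g(x))\prod_{y\in\xi}g_t(y)$. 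Both inequalities in \eqref{eq:bounds} then follow by estimating $A_t(x)$ and integrating the resulting differential inequality over $[0,1]$.

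For the lower bound this is immediate. Since each factor $g_t(y)\in[0,1]$ we have $\prod_{y\in\Xi}g_t(y)\le 1$, whence $A_t(x)\le\E\bigl(\lambda(x\mvert\Xi)\bigr)=\lambda$ by the definition of the intensity. Thus $\psi'(t)\ge -\lambda\int_{\R^d}(1-g(x))\,dx=-\lambda G$, and integrating from $0$ to $1$ gives $\psi(1)\ge 1-\lambda G$.

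The upper bound is the substantial direction and is where Stein's method enters, through the matching lower estimate $A_t(x)\ge \lambda\,e^{-c^*tG}$. To obtain it I would first rewrite $A_t(x)=\lambda\,\E^{!}_{x}\bigl(\prod_{y\in\Xi}g_t(y)\bigr)$, i.e.\ as $\lambda$ times the p.g.fl.\ of the reduced Palm process at $x$; this is exactly \eqref{eq:gnz} read as the disintegration of the reduced Campbell measure. For a locally stable Gibbs process the reduced Palm process at $x$ is again Gibbs, with conditional intensity $\lambda(\,\cdot\mvert\,\cdot\cup\{x\})$, hence locally stable with the \emph{same} constant $c^*$. It therefore suffices to prove that any locally stable Gibbs process with constant $c^*$ has p.g.fl.\ bounded below by that of $\poisson(c^*)$, namely $\E\bigl(\prod_{y\in\Xi}g_t(y)\bigr)\ge \exp\bigl(-c^*\int(1-g_t)\bigr)=e^{-c^*tG}$. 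Here I would invoke Stein's method for Poisson process approximation: expressing the difference of the two functionals via the Stein equation for the spatial immigration--death process with stationary law $\poisson(c^*)$, and applying \eqref{eq:gnz} to the death part, one reaches an identity whose integrand is the product of a Stein-solution increment $f(\xi\cup\{x\})-f(\xi)$ and the factor $c^*-\lambda(x\mvert\xi)$, the latter nonnegative by \eqref{eq:loc-s}. Monotonicity of the decreasing functional $\prod_{y}g_t(y)$ forces the increment to have a definite sign, so the whole integrand is of one sign and the comparison inequality follows.

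Granting $A_t(x)\ge\lambda e^{-c^*tG}$, I would conclude $\psi'(t)\le -\lambda G\,e^{-c^*tG}$, and integrating over $t\in[0,1]$ yields $\psi(1)\le 1-\lambda G\int_0^1 e^{-c^*tG}\,dt = 1-\tfrac{\lambda}{c^*}\bigl(1-e^{-c^*G}\bigr)$, as claimed. I expect the main obstacle to be the sign/monotonicity argument in the Stein step: it requires showing that the Stein solution is monotone in the point configuration, which rests on a monotone coupling of the immigration--death dynamics started from $\xi$ and from $\xi\cup\{x\}$, and on enough integrability — finiteness of the expected number of points of $\Xi$ in the support of $1-g$, itself a by-product of the domination by $\poisson(c^*)$ — to make the Palm rewriting and all the interchanges rigorous.
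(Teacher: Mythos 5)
Your proposal is correct, but it is assembled quite differently from the paper's proof, so a comparison is worthwhile. The paper applies Stein's method once, directly to $\E\prod_{y\in\Xi}g(y)$: Proposition~\ref{prop:1} is an exact identity $\Psi_\Xi(g)=1-\tfrac{\lambda}{\nu}(1-e^{-\nu G})+I_\nu(g)$ valid for every Poisson intensity $\nu$, Proposition~\ref{prop:bounds-I} bounds the remainder $I_\nu(g)$ using $0\le\lambda(x\mvert\Xi)\le c^*$ together with the Kendall--M{\o}ller thinning coupling $\Xi\vert_A\subset\Eta_{c^*}$, and the choice $\nu=c^*$ gives the upper bound; the lower bound is a separate two-line argument (Weierstrass product inequality plus Campbell's formula). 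You instead interpolate $g_t=1-t(1-g)$, differentiate, and use \eqref{eq:gnz} to get $\psi'(t)=-\int(1-g(x))A_t(x)\,dx$, so that both bounds come from sandwiching $A_t(x)$: the trivial bound $A_t(x)\le\lambda$ recovers the lower bound, and the Palm rewriting $A_t(x)=\lambda\,\E^!_x\prod_{y\in\Xi}g_t(y)$ reduces the upper bound to the comparison lemma that any locally stable Gibbs process with constant $c^*$ has p.g.fl.\ at least $e^{-c^*\int(1-g_t)}$. That lemma is true, and your Stein sign argument for it is sound: with $\nu=c^*$ the factor $c^*-\lambda(x\mvert\xi)$ in \eqref{eq:SteinGNZ} is nonnegative, and the coupling $Z_{\xi+\delta_x}\eqinlaw Z_\xi+D_{\delta_x}$ used in the paper shows $h_f(\xi+\delta_x)-h_f(\xi)\ge0$ for the decreasing functional $f(\xi)=\prod_{y\in\xi}g_t(y)$. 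In fact the lemma follows even more cheaply from the same Kendall--M{\o}ller domination the paper already invokes, so your route can be made entirely Stein-free, which is a genuine simplification. The trade-off is that you need two inputs the paper avoids: (i) the fact that the reduced Palm distribution of a Gibbs process is again Gibbs with conditional intensity $\lambda(\cdot\mvert\cdot\cup\{x\})$ (hence locally stable with the \emph{same} $c^*$) --- this requires an iterated GNZ argument and the cocycle property of the conditional intensity, and holds only for a.e.\ $x$, which suffices here but should be stated; and (ii) differentiation under the expectation, which is justified by $\E\,\Xi(\supp(1-g))\le c^*\abs{\supp(1-g)}<\infty$, as you note. What each approach buys: the paper's exact identity with explicit remainder $I_\nu(g)$ has independent interest and keeps all analysis on the original process, while your differential formulation explains structurally where the expression $1-\tfrac{\lambda}{c^*}(1-e^{-c^*G})$ comes from (it is $1-\lambda G\int_0^1e^{-c^*tG}\,dt$) and isolates a reusable comparison principle for locally stable processes.
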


\begin{remark}
With a slight adaptation of the proof of Theorem~\ref{thm:bounds} it is possible to obtain bounds on the p.g.fl.s of possibly non-stationary processes. Let $\Xi$ be a locally stable Gibbs process with intensity function $\lambda(x)$ and local stability function $c^*(x)$.
Let $g\colon \R^d\to [0,1]$ be a function for which $1-g$ has bounded support. Then
\begin{multline}
\label{eq:bounds-non-stat}
1-\int_{\R^d}(1-g(x))\lambda(x)\;dx \le \E\Bigl(\prod_{y\in \Xi}g(y)\Bigr) \\
\le 1-\frac{\int_{\R^d}(1-g(x))\lambda(x)\;dx}{\int_{\R^d}(1-g(x))c^*(x)\;dx}\left(1-\exp\Big(-\int_{\R^d}(1-g(x))c^*(x)\;dx\Big)\right).
\end{multline}
As the intensity appears under the integral sign, we cannot generalise the arguments of Section~\ref{sec:bounds-int} based on \eqref{eq:bounds-non-stat}.
\end{remark}

For the rest of the paper we tacitly assume all point processes to be stationary.

\section{Bounds on the intensity}
\label{sec:bounds-int}

For the intensity of a inhibitory pairwise interaction process we immediately obtain from Theorem~\ref{thm:bounds} the following result.

\begin{theorem}
\label{thm:lambdaPIP}
Let $\Xi\sim \pip(\beta,\varphi)$ be inhibitory with finite interaction range. Then
\begin{equation}
\label{eq:lambdaPIP}
\frac{\beta}{1+\beta G}\le \lambda \le \frac{\beta}{2-e^{-\beta G}},
\end{equation}
where $G=\int_{\R^d}1-\varphi(x)\,dx$.
\end{theorem}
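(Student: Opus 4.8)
The plan is to specialise the general p.g.fl.\ estimates of Theorem~\ref{thm:bounds} to the test function $g=\varphi$ and then combine them with the exact identity $\lambda=\beta\,\Psi_\Xi(\varphi)$ recorded in~\eqref{eq:lambda-pip}, solving the resulting two inequalities for $\lambda$. The whole argument is a direct application followed by elementary rearrangement, so the only genuine work is checking that the hypotheses of Theorem~\ref{thm:bounds} hold for this choice of $g$.

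First I would verify admissibility. Since $\Xi$ is inhibitory we have $\varphi\le 1$, and as an interaction function $\varphi\ge 0$, so $\varphi$ indeed maps into $[0,1]$. The finite interaction range assumption states that $1-\varphi$ is compactly supported, hence $1-g=1-\varphi$ has bounded support, exactly as required. Moreover, from the form~\eqref{eq:pip} of the conditional intensity, inhibition gives
\[
\lambda(x\mvert\xi)=\beta\prod_{y\in\xi}\varphi(x-y)\le\beta,
\]
so $\Xi$ is locally stable with constant $c^*=\beta$. Applying~\eqref{eq:bounds} with $g=\varphi$, $c^*=\beta$, and $G=\int_{\R^d}(1-\varphi(x))\,dx$ then yields
\[
1-\lambda G\le\Psi_\Xi(\varphi)\le 1-\frac{\lambda}{\beta}\bigl(1-e^{-\beta G}\bigr).
\]

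Next I would substitute the identity $\Psi_\Xi(\varphi)=\lambda/\beta$ into both sides. The left inequality becomes $1-\lambda G\le\lambda/\beta$, i.e.\ $\beta\le\lambda(1+\beta G)$, which rearranges to the lower bound $\lambda\ge\beta/(1+\beta G)$. The right inequality becomes $\lambda/\beta\le 1-(\lambda/\beta)(1-e^{-\beta G})$, i.e.\ $(\lambda/\beta)(2-e^{-\beta G})\le 1$, which rearranges to the upper bound $\lambda\le\beta/(2-e^{-\beta G})$. Together these give~\eqref{eq:lambdaPIP}.

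The main obstacle is essentially nonexistent beyond this bookkeeping: the only steps requiring care are confirming that $c^*=\beta$ is a legitimate local stability constant for an inhibitory process and that $\varphi$ qualifies as an admissible argument of the p.g.fl.\ in Theorem~\ref{thm:bounds}. Once those are in place, the result follows purely by inserting $\Psi_\Xi(\varphi)=\lambda/\beta$ and isolating $\lambda$ in each of the two linear inequalities.
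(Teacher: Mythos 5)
Your proposal is correct and follows essentially the same route as the paper: apply Theorem~\ref{thm:bounds} with $g=\varphi$ and $c^*=\beta$, substitute $\Psi_\Xi(\varphi)=\lambda/\beta$ from \eqref{eq:lambda-pip}, and rearrange the two resulting inequalities. The admissibility checks you spell out (that $\varphi$ maps into $[0,1]$, that $1-\varphi$ has bounded support, and that inhibition gives local stability with constant $\beta$) are left implicit in the paper's one-line proof but are exactly the right justification.
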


\begin{proof}
Recall from \eqref{eq:lambda-pip} that $\lambda=\beta \hbit \E \prod_{y\in \Xi}\varphi(y)$ and use $c^*=\beta$. Theorem~\ref{thm:bounds} then yields
\begin{equation*}
1-\lambda G \le \frac{\lambda}{\beta}\le 1-\frac{\lambda}{\beta}\big(1-e^{-\beta G}\big)
\end{equation*}
which can be rearranged as \eqref{eq:lambdaPIP}.
\end{proof}

\begin{remark}
The lower bound of \eqref{eq:lambdaPIP} can also be found in \cite[p.\,96]{ruelle69} with the restriction $\beta G <e^{-1}$, whereas our inequality holds for all values of $\beta$ and $G$.
\end{remark}

\begin{example}
Let $\Xi$ be a Strauss process, i.e.
\begin{equation*}
\varphi(x) =\begin{cases} \gamma \quad &\text{if} \quad  \|x\| \le r\\
1 \quad &\text{if} \quad \|x\|>r \end{cases}
\end{equation*}
for some parameters $r>0$ and $0\le \gamma \le 1$. Then $G=(1-\gamma)\alpha_dr^d$, where $\alpha_d$ denotes the volume of the unit ball. Figure~\ref{fig:Strauss50} shows that for a reasonable choice of the parameters $(\beta,r,\gamma)$ the bounds on $\lambda$ are quite good. The maximal relative error between the bounds and the simulated values is about 3.5\%. 
\end{example}

\begin{remark}
For the special case of a Strauss hard core process on $\RR^2$, i.e.\ setting $\gamma=0$ in the above example, intensity estimates are derived in \cite[p.\,181]{skm95}. The procedure used there can easily be refined to yield the bounds
\begin{equation*}
  \frac{\beta}{1+\beta \pi r^2} \leq \lambda \leq \frac{\beta}{1+\beta \pi r^2/4}
\end{equation*}
(Dietrich Stoyan, 2012, personal communication). The lower bound is the same as implied by our Theorem~\ref{thm:lambdaPIP}. The upper bound is worse for small and moderate choices of $\beta$, but better if $\beta$ is very large (e.g.\ $\geq 500$ if $r=0.05$).
\end{remark}

\begin{figure}[!ht]
\begin{center}
\begin{tikzpicture}[x=1pt,y=1pt]
\definecolor[named]{fillColor}{rgb}{1.00,1.00,1.00}
\path[use as bounding box,fill=fillColor,fill opacity=0.00] (0,0) rectangle (254.39,231.26);
\begin{scope}
\path[clip] (  0.00,  0.00) rectangle (254.39,231.26);
\definecolor[named]{drawColor}{rgb}{0.00,0.00,0.00}

\path[draw=drawColor,line width= 0.4pt,line join=round,line cap=round] ( 50.64, 28.91) -- (232.66, 28.91);

\path[draw=drawColor,line width= 0.4pt,line join=round,line cap=round] ( 50.64, 28.91) -- ( 50.64, 22.91);

\path[draw=drawColor,line width= 0.4pt,line join=round,line cap=round] ( 87.05, 28.91) -- ( 87.05, 22.91);

\path[draw=drawColor,line width= 0.4pt,line join=round,line cap=round] (123.45, 28.91) -- (123.45, 22.91);

\path[draw=drawColor,line width= 0.4pt,line join=round,line cap=round] (159.85, 28.91) -- (159.85, 22.91);

\path[draw=drawColor,line width= 0.4pt,line join=round,line cap=round] (196.25, 28.91) -- (196.25, 22.91);

\path[draw=drawColor,line width= 0.4pt,line join=round,line cap=round] (232.66, 28.91) -- (232.66, 22.91);

\node[text=drawColor,anchor=base,inner sep=0pt, outer sep=0pt, scale=  0.70] at ( 50.64, 15.71) {0.0};

\node[text=drawColor,anchor=base,inner sep=0pt, outer sep=0pt, scale=  0.70] at ( 87.05, 15.71) {0.2};

\node[text=drawColor,anchor=base,inner sep=0pt, outer sep=0pt, scale=  0.70] at (123.45, 15.71) {0.4};

\node[text=drawColor,anchor=base,inner sep=0pt, outer sep=0pt, scale=  0.70] at (159.85, 15.71) {0.6};

\node[text=drawColor,anchor=base,inner sep=0pt, outer sep=0pt, scale=  0.70] at (196.25, 15.71) {0.8};

\node[text=drawColor,anchor=base,inner sep=0pt, outer sep=0pt, scale=  0.70] at (232.66, 15.71) {1.0};

\path[draw=drawColor,line width= 0.4pt,line join=round,line cap=round] ( 43.36, 35.87) -- ( 43.36,209.85);

\path[draw=drawColor,line width= 0.4pt,line join=round,line cap=round] ( 43.36, 35.87) -- ( 37.36, 35.87);

\path[draw=drawColor,line width= 0.4pt,line join=round,line cap=round] ( 43.36, 79.36) -- ( 37.36, 79.36);

\path[draw=drawColor,line width= 0.4pt,line join=round,line cap=round] ( 43.36,122.86) -- ( 37.36,122.86);

\path[draw=drawColor,line width= 0.4pt,line join=round,line cap=round] ( 43.36,166.35) -- ( 37.36,166.35);

\path[draw=drawColor,line width= 0.4pt,line join=round,line cap=round] ( 43.36,209.85) -- ( 37.36,209.85);

\node[text=drawColor,anchor=base east,inner sep=0pt, outer sep=0pt, scale=  0.70] at ( 34.96, 33.46) {20};

\node[text=drawColor,anchor=base east,inner sep=0pt, outer sep=0pt, scale=  0.70] at ( 34.96, 76.95) {40};

\node[text=drawColor,anchor=base east,inner sep=0pt, outer sep=0pt, scale=  0.70] at ( 34.96,120.45) {60};

\node[text=drawColor,anchor=base east,inner sep=0pt, outer sep=0pt, scale=  0.70] at ( 34.96,163.94) {80};

\node[text=drawColor,anchor=base east,inner sep=0pt, outer sep=0pt, scale=  0.70] at ( 34.96,207.44) {100};
\end{scope}
\begin{scope}
\path[clip] ( 43.36, 28.91) rectangle (239.94,216.81);
\definecolor[named]{fillColor}{rgb}{0.83,0.83,0.83}

\path[fill=fillColor] ( 50.64,133.22) --
	( 52.46,133.55) --
	( 54.28,133.88) --
	( 56.10,134.22) --
	( 57.92,134.56) --
	( 59.74,134.91) --
	( 61.56,135.26) --
	( 63.38,135.61) --
	( 65.20,135.97) --
	( 67.02,136.33) --
	( 68.84,136.70) --
	( 70.66,137.08) --
	( 72.48,137.45) --
	( 74.30,137.84) --
	( 76.12,138.23) --
	( 77.94,138.62) --
	( 79.76,139.02) --
	( 81.58,139.42) --
	( 83.40,139.83) --
	( 85.23,140.25) --
	( 87.05,140.67) --
	( 88.87,141.09) --
	( 90.69,141.53) --
	( 92.51,141.97) --
	( 94.33,142.41) --
	( 96.15,142.86) --
	( 97.97,143.32) --
	( 99.79,143.78) --
	(101.61,144.25) --
	(103.43,144.73) --
	(105.25,145.21) --
	(107.07,145.70) --
	(108.89,146.20) --
	(110.71,146.70) --
	(112.53,147.22) --
	(114.35,147.73) --
	(116.17,148.26) --
	(117.99,148.80) --
	(119.81,149.34) --
	(121.63,149.89) --
	(123.45,150.45) --
	(125.27,151.02) --
	(127.09,151.59) --
	(128.91,152.18) --
	(130.73,152.77) --
	(132.55,153.38) --
	(134.37,153.99) --
	(136.19,154.61) --
	(138.01,155.24) --
	(139.83,155.88) --
	(141.65,156.54) --
	(143.47,157.20) --
	(145.29,157.87) --
	(147.11,158.55) --
	(148.93,159.25) --
	(150.75,159.96) --
	(152.57,160.67) --
	(154.39,161.40) --
	(156.21,162.15) --
	(158.03,162.90) --
	(159.85,163.67) --
	(161.67,164.45) --
	(163.49,165.24) --
	(165.31,166.05) --
	(167.13,166.87) --
	(168.95,167.71) --
	(170.77,168.56) --
	(172.59,169.43) --
	(174.41,170.31) --
	(176.23,171.20) --
	(178.05,172.12) --
	(179.87,173.05) --
	(181.69,174.00) --
	(183.51,174.96) --
	(185.33,175.94) --
	(187.15,176.95) --
	(188.97,177.97) --
	(190.79,179.01) --
	(192.61,180.07) --
	(194.43,181.15) --
	(196.25,182.25) --
	(198.07,183.37) --
	(199.89,184.52) --
	(201.71,185.69) --
	(203.53,186.88) --
	(205.35,188.10) --
	(207.17,189.34) --
	(208.99,190.61) --
	(210.81,191.90) --
	(212.63,193.23) --
	(214.45,194.58) --
	(216.27,195.96) --
	(218.09,197.37) --
	(219.91,198.81) --
	(221.74,200.28) --
	(223.56,201.79) --
	(225.38,203.33) --
	(227.20,204.90) --
	(229.02,206.51) --
	(230.84,208.16) --
	(232.66,209.85) --
	(232.66,209.85) --
	(230.84,208.16) --
	(229.02,206.49) --
	(227.20,204.84) --
	(225.38,203.23) --
	(223.56,201.63) --
	(221.74,200.06) --
	(219.91,198.52) --
	(218.09,196.99) --
	(216.27,195.49) --
	(214.45,194.01) --
	(212.63,192.56) --
	(210.81,191.12) --
	(208.99,189.70) --
	(207.17,188.31) --
	(205.35,186.93) --
	(203.53,185.57) --
	(201.71,184.23) --
	(199.89,182.91) --
	(198.07,181.61) --
	(196.25,180.33) --
	(194.43,179.06) --
	(192.61,177.81) --
	(190.79,176.58) --
	(188.97,175.36) --
	(187.15,174.16) --
	(185.33,172.97) --
	(183.51,171.80) --
	(181.69,170.65) --
	(179.87,169.51) --
	(178.05,168.38) --
	(176.23,167.27) --
	(174.41,166.17) --
	(172.59,165.09) --
	(170.77,164.02) --
	(168.95,162.96) --
	(167.13,161.91) --
	(165.31,160.88) --
	(163.49,159.86) --
	(161.67,158.86) --
	(159.85,157.86) --
	(158.03,156.88) --
	(156.21,155.91) --
	(154.39,154.95) --
	(152.57,154.00) --
	(150.75,153.06) --
	(148.93,152.13) --
	(147.11,151.22) --
	(145.29,150.31) --
	(143.47,149.41) --
	(141.65,148.53) --
	(139.83,147.65) --
	(138.01,146.79) --
	(136.19,145.93) --
	(134.37,145.08) --
	(132.55,144.25) --
	(130.73,143.42) --
	(128.91,142.60) --
	(127.09,141.79) --
	(125.27,140.99) --
	(123.45,140.19) --
	(121.63,139.41) --
	(119.81,138.63) --
	(117.99,137.86) --
	(116.17,137.10) --
	(114.35,136.35) --
	(112.53,135.60) --
	(110.71,134.87) --
	(108.89,134.14) --
	(107.07,133.42) --
	(105.25,132.70) --
	(103.43,131.99) --
	(101.61,131.29) --
	( 99.79,130.60) --
	( 97.97,129.91) --
	( 96.15,129.23) --
	( 94.33,128.56) --
	( 92.51,127.89) --
	( 90.69,127.23) --
	( 88.87,126.58) --
	( 87.05,125.93) --
	( 85.23,125.29) --
	( 83.40,124.66) --
	( 81.58,124.03) --
	( 79.76,123.40) --
	( 77.94,122.79) --
	( 76.12,122.18) --
	( 74.30,121.57) --
	( 72.48,120.97) --
	( 70.66,120.38) --
	( 68.84,119.79) --
	( 67.02,119.20) --
	( 65.20,118.62) --
	( 63.38,118.05) --
	( 61.56,117.48) --
	( 59.74,116.92) --
	( 57.92,116.36) --
	( 56.10,115.81) --
	( 54.28,115.26) --
	( 52.46,114.72) --
	( 50.64,114.18) --
	cycle;
\definecolor[named]{drawColor}{rgb}{0.00,0.00,0.00}

\path[draw=drawColor,line width= 0.4pt,line join=round,line cap=round] ( 50.64,126.40) --
	( 52.46,126.84) --
	( 54.28,127.28) --
	( 56.10,127.73) --
	( 57.92,128.18) --
	( 59.74,128.64) --
	( 61.56,129.10) --
	( 63.38,129.56) --
	( 65.20,130.03) --
	( 67.02,130.50) --
	( 68.84,130.97) --
	( 70.66,131.45) --
	( 72.48,131.94) --
	( 74.30,132.43) --
	( 76.12,132.92) --
	( 77.94,133.42) --
	( 79.76,133.93) --
	( 81.58,134.44) --
	( 83.40,134.95) --
	( 85.23,135.47) --
	( 87.05,135.99) --
	( 88.87,136.52) --
	( 90.69,137.05) --
	( 92.51,137.59) --
	( 94.33,138.14) --
	( 96.15,138.69) --
	( 97.97,139.25) --
	( 99.79,139.81) --
	(101.61,140.38) --
	(103.43,140.95) --
	(105.25,141.53) --
	(107.07,142.12) --
	(108.89,142.71) --
	(110.71,143.31) --
	(112.53,143.92) --
	(114.35,144.53) --
	(116.17,145.15) --
	(117.99,145.78) --
	(119.81,146.41) --
	(121.63,147.05) --
	(123.45,147.70) --
	(125.27,148.35) --
	(127.09,149.02) --
	(128.91,149.69) --
	(130.73,150.37) --
	(132.55,151.06) --
	(134.37,151.75) --
	(136.19,152.46) --
	(138.01,153.17) --
	(139.83,153.89) --
	(141.65,154.62) --
	(143.47,155.36) --
	(145.29,156.11) --
	(147.11,156.87) --
	(148.93,157.64) --
	(150.75,158.42) --
	(152.57,159.21) --
	(154.39,160.01) --
	(156.21,160.82) --
	(158.03,161.64) --
	(159.85,162.47) --
	(161.67,163.32) --
	(163.49,164.17) --
	(165.31,165.04) --
	(167.13,165.92) --
	(168.95,166.82) --
	(170.77,167.72) --
	(172.59,168.64) --
	(174.41,169.58) --
	(176.23,170.53) --
	(178.05,171.49) --
	(179.87,172.47) --
	(181.69,173.46) --
	(183.51,174.47) --
	(185.33,175.49) --
	(187.15,176.54) --
	(188.97,177.60) --
	(190.79,178.67) --
	(192.61,179.77) --
	(194.43,180.88) --
	(196.25,182.01) --
	(198.07,183.16) --
	(199.89,184.34) --
	(201.71,185.53) --
	(203.53,186.75) --
	(205.35,187.98) --
	(207.17,189.25) --
	(208.99,190.53) --
	(210.81,191.84) --
	(212.63,193.18) --
	(214.45,194.54) --
	(216.27,195.93) --
	(218.09,197.35) --
	(219.91,198.79) --
	(221.74,200.27) --
	(223.56,201.78) --
	(225.38,203.32) --
	(227.20,204.90) --
	(229.02,206.51) --
	(230.84,208.16) --
	(232.66,209.85);

\path[draw=drawColor,line width= 0.4pt,dash pattern=on 1pt off 3pt ,line join=round,line cap=round] ( 50.84,  0.00) --
	( 52.46, 61.39) --
	( 54.28, 67.55) --
	( 56.10, 71.86) --
	( 57.92, 75.30) --
	( 59.74, 78.25) --
	( 61.56, 80.87) --
	( 63.38, 83.24) --
	( 65.20, 85.43) --
	( 67.02, 87.48) --
	( 68.84, 89.41) --
	( 70.66, 91.25) --
	( 72.48, 93.01) --
	( 74.30, 94.70) --
	( 76.12, 96.33) --
	( 77.94, 97.91) --
	( 79.76, 99.44) --
	( 81.58,100.94) --
	( 83.40,102.40) --
	( 85.23,103.83) --
	( 87.05,105.23) --
	( 88.87,106.61) --
	( 90.69,107.96) --
	( 92.51,109.30) --
	( 94.33,110.61) --
	( 96.15,111.91) --
	( 97.97,113.19) --
	( 99.79,114.46) --
	(101.61,115.72) --
	(103.43,116.97) --
	(105.25,118.21) --
	(107.07,119.43) --
	(108.89,120.65) --
	(110.71,121.87) --
	(112.53,123.07) --
	(114.35,124.27) --
	(116.17,125.46) --
	(117.99,126.65) --
	(119.81,127.84) --
	(121.63,129.02) --
	(123.45,130.20) --
	(125.27,131.38) --
	(127.09,132.55) --
	(128.91,133.73) --
	(130.73,134.90) --
	(132.55,136.07) --
	(134.37,137.24) --
	(136.19,138.41) --
	(138.01,139.59) --
	(139.83,140.76) --
	(141.65,141.93) --
	(143.47,143.11) --
	(145.29,144.29) --
	(147.11,145.47) --
	(148.93,146.65) --
	(150.75,147.84) --
	(152.57,149.03) --
	(154.39,150.22) --
	(156.21,151.42) --
	(158.03,152.62) --
	(159.85,153.83) --
	(161.67,155.04) --
	(163.49,156.26) --
	(165.31,157.48) --
	(167.13,158.71) --
	(168.95,159.94) --
	(170.77,161.18) --
	(172.59,162.43) --
	(174.41,163.69) --
	(176.23,164.95) --
	(178.05,166.22) --
	(179.87,167.50) --
	(181.69,168.78) --
	(183.51,170.08) --
	(185.33,171.38) --
	(187.15,172.70) --
	(188.97,174.02) --
	(190.79,175.35) --
	(192.61,176.70) --
	(194.43,178.05) --
	(196.25,179.42) --
	(198.07,180.80) --
	(199.89,182.19) --
	(201.71,183.59) --
	(203.53,185.01) --
	(205.35,186.44) --
	(207.17,187.88) --
	(208.99,189.34) --
	(210.81,190.81) --
	(212.63,192.30) --
	(214.45,193.80) --
	(216.27,195.33) --
	(218.09,196.86) --
	(219.91,198.42) --
	(221.74,199.99) --
	(223.56,201.58) --
	(225.38,203.20) --
	(227.20,204.83) --
	(229.02,206.48) --
	(230.84,208.15) --
	(232.66,209.85);

\node[text=drawColor,anchor=base,inner sep=0pt, outer sep=0pt, scale=  0.55] at ( 50.64,118.73) {+};

\node[text=drawColor,anchor=base,inner sep=0pt, outer sep=0pt, scale=  0.55] at ( 59.74,121.12) {+};

\node[text=drawColor,anchor=base,inner sep=0pt, outer sep=0pt, scale=  0.55] at ( 68.84,124.36) {+};

\node[text=drawColor,anchor=base,inner sep=0pt, outer sep=0pt, scale=  0.55] at ( 77.94,127.71) {+};

\node[text=drawColor,anchor=base,inner sep=0pt, outer sep=0pt, scale=  0.55] at ( 87.05,130.55) {+};

\node[text=drawColor,anchor=base,inner sep=0pt, outer sep=0pt, scale=  0.55] at ( 96.15,134.09) {+};

\node[text=drawColor,anchor=base,inner sep=0pt, outer sep=0pt, scale=  0.55] at (105.25,137.11) {+};

\node[text=drawColor,anchor=base,inner sep=0pt, outer sep=0pt, scale=  0.55] at (114.35,140.39) {+};

\node[text=drawColor,anchor=base,inner sep=0pt, outer sep=0pt, scale=  0.55] at (123.45,144.11) {+};

\node[text=drawColor,anchor=base,inner sep=0pt, outer sep=0pt, scale=  0.55] at (132.55,147.98) {+};

\node[text=drawColor,anchor=base,inner sep=0pt, outer sep=0pt, scale=  0.55] at (141.65,152.07) {+};

\node[text=drawColor,anchor=base,inner sep=0pt, outer sep=0pt, scale=  0.55] at (150.75,155.78) {+};

\node[text=drawColor,anchor=base,inner sep=0pt, outer sep=0pt, scale=  0.55] at (159.85,160.21) {+};

\node[text=drawColor,anchor=base,inner sep=0pt, outer sep=0pt, scale=  0.55] at (168.95,165.08) {+};

\node[text=drawColor,anchor=base,inner sep=0pt, outer sep=0pt, scale=  0.55] at (178.05,169.61) {+};

\node[text=drawColor,anchor=base,inner sep=0pt, outer sep=0pt, scale=  0.55] at (187.15,174.75) {+};

\node[text=drawColor,anchor=base,inner sep=0pt, outer sep=0pt, scale=  0.55] at (196.25,180.82) {+};

\node[text=drawColor,anchor=base,inner sep=0pt, outer sep=0pt, scale=  0.55] at (205.35,186.63) {+};

\node[text=drawColor,anchor=base,inner sep=0pt, outer sep=0pt, scale=  0.55] at (214.45,193.56) {+};

\node[text=drawColor,anchor=base,inner sep=0pt, outer sep=0pt, scale=  0.55] at (223.56,200.35) {+};

\node[text=drawColor,anchor=base,inner sep=0pt, outer sep=0pt, scale=  0.55] at (232.66,208.45) {+};
\end{scope}
\begin{scope}
\path[clip] (  0.00,  0.00) rectangle (254.39,231.26);
\definecolor[named]{drawColor}{rgb}{0.00,0.00,0.00}

\path[draw=drawColor,line width= 0.4pt,line join=round,line cap=round] ( 43.36, 28.91) --
	(239.94, 28.91) --
	(239.94,216.81) --
	( 43.36,216.81) --
	( 43.36, 28.91);

\node[text=drawColor,rotate= 90.00,anchor=base,inner sep=0pt, outer sep=0pt, scale=  0.70] at ( 16.96,122.86) {$\lambda$};

\node[text=drawColor,anchor=base,inner sep=0pt, outer sep=0pt, scale=  0.70] at (141.65,  7.31) {$\gamma$};
\end{scope}
\end{tikzpicture}
\caption{\label{fig:Strauss100} Intensities of two-dimensional Strauss processes with $\beta=100$, $r=0.05$ and values of $\gamma$ ranging from $0$ to $1$. The solid line is $\lambda_{PS}$, the dashed line is $\lambda_{MF}$, and the grey area corresponds to the bounds in \eqref{eq:lambdaPIP}. The pluses are estimates of the intensities based on 10,000 simulations each.}
\end{center}
\end{figure}

For a comparison of our bounds on the intensity to known approximations from the literature we concentrate on two methods.

The first one is the \emph{Poisson-saddlepoint} approximation proposed in \citep{baddeleynair12}. The authors replaced in \eqref{eq:lambda-pip} the Gibbs process $\Xi\sim\pip(\beta,\varphi)$ by a Poisson process $\Eta_{\lambda_{PS}}$ with intensity $\lambda_{PS}$ such that the following equality holds
\begin{equation}
\label{eq:ps}
\lambda_{PS}=\E\lambda(0\mvert \Eta_{\lambda_{PS}})=\beta \hbit \E\Bigl(\prod_{y\in \Eta_{\lambda_{PS}}}\varphi(y)\Bigr).
\end{equation}
Solving this equation yields
\begin{equation}
\label{eq:lambda-PS}
\lambda_{PS}=\frac{W(\beta G)}{G},
\end{equation}
where $W$ is \emph{Lambert's W function}, the inverse of $x\mapsto xe^x$, and $G=\int_{\R^d}1-\varphi(x)\,dx$ as above. 

The second method is the \emph{mean-field} approximation that was also described in \citep{baddeleynair12} and is given by
\begin{equation}
\label{eq:lambda-MF}
\lambda_{MF}=\frac{W(\beta \Gamma)}{\Gamma},
\end{equation}
where $\Gamma=-\int_{\R^d}\log(\varphi(x))\,dx$. Figure~\ref{fig:Strauss100} shows the two approximations and our bounds from Inequality~\eqref{eq:lambdaPIP} for two-dimensional Strauss processes.

In \citep{baddeleynair12} it is shown that under the conditions of Theorem~\ref{thm:lambdaPIP} we have $\lambda\ge \lambda_{MF}$. The authors also conjectured, based on simulations for Strauss processes, that $\lambda_{PS}$ is an upper bound for $\lambda$. However, the next example indicates that this is not generally true.

\begin{figure}[!h]
\begin{center}

\caption{\label{fig:ringprozess} The two processes in Example~\ref{ex:ringprozess} restricted to the unit square. In the left panel is a realisation of the hard annulus process having $489$ points, whereas $\lambda_{PS}=295.2$. In the right panel is a hard core process realisation having $188$ points.}
\end{center}
\end{figure}

\begin{example}
\label{ex:ringprozess}
Consider the process $\Xi\sim \pip(\beta,\varphi)$ with the interaction function
\begin{equation*}
\varphi(x) =\begin{cases} 1 \quad &\text{if} \quad  \|x\| \le r\\
0 \quad &\text{if} \quad r<\|x\| \le R\\
1 \quad &\text{if} \quad  \|x\| > R
 \end{cases}
\end{equation*}
for constants $0\le r\le R $. We refer to this as a \emph{hard annulus process}. It is a special case of a so-called \emph{multiscale Strauss process}, see \citep[Ex. 6.2]{moellerwaage04}. Let $d=2$, $\beta=3000$, $r=0.05$ and $R=\sqrt{2}r$. 
Then 
\begin{equation*}
\lambda_{MF}=0,\quad \frac{\beta}{1+\beta G}=122.1, \quad \lambda_{PS}=295.2\quad \text{and}\quad \frac{\beta}{2-e^{-\beta G}}=1500.
\end{equation*}
An estimate of the intensity based on $300$ simulations gave $\hat{\lambda}=493.8 > \lambda_{PS}$. For comparison we also estimated the intensity of a Strauss hard core process with the same $\beta$ and $G$ and obtained $\hat{\lambda}=193.3$. Figure~\ref{fig:ringprozess} shows that although the two processes have the same $\beta$ and $G$, their realisations look quite different. All simulations were performed by long runs ($10^7$ steps) of Markov Chain Monte Carlo. 

We were not able to prove that $\lambda>\lambda_{PS}$ in this case, but bring forward the following heuristic argument for the observed phenomenon. The simulations show that for large $\beta$ the points tend to cluster on ``islands'' of radius $\le r/2$ which are separated by a distance $\ge R$. Since the points within each island do not interact, we expect the intensity to grow linearly in $\beta$ for large $\beta$. However $\lambda_{PS}$ only grows logarithmically for large $\beta$, so that at some point the intensity will overtake.
\end{example}

Even if $\lambda_{PS}$ may not serve as a bound on $\lambda$ it remains useful as an approximation. Empirically its values stay relatively close to the simulated values, whereas the difference of our upper and lower bounds in \eqref{eq:lambdaPIP} increases for large $\beta G$.

The following result in connection with Theorem~\ref{thm:lambdaPIP} gives an upper bound on the error in Poisson saddlepoint approximation.

\begin{proposition}
\label{lemma:PS}
Under the conditions of Theorem~\ref{thm:lambdaPIP} we have
\begin{equation}
\label{eq:bounds-lambda-PS}
\frac{\beta}{1+\beta G}\le \lambda_{PS} \le \frac{\beta}{2-e^{-\beta G}}.
\end{equation}
\end{proposition}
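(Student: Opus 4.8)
The plan is to recognize that the Poisson saddlepoint intensity is defined through the p.g.fl.\ of a point process to which Theorem~\ref{thm:bounds} itself applies, and then to repeat the rearrangement already carried out in the proof of Theorem~\ref{thm:lambdaPIP}. The reference process $\Eta_{\lambda_{PS}}$ appearing in \eqref{eq:ps} is a homogeneous Poisson process of intensity $\lambda_{PS}$. Such a process is a stationary locally stable Gibbs process whose conditional intensity is the \emph{constant} $\lambda(x\mvert\xi)\equiv\lambda_{PS}$; indeed this is exactly the content of the Slivnyak--Mecke equation, i.e.\ \eqref{eq:gnz} holds with $\lambda(\cdot\mvert\cdot)\equiv\lambda_{PS}$. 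Its intensity equals $\lambda_{PS}$, and its p.g.fl.\ is known in closed form, $\Psi_{\Eta_{\lambda_{PS}}}(\varphi)=e^{-\lambda_{PS}G}$, so that \eqref{eq:ps} can be rewritten as the identity $\lambda_{PS}/\beta=\Psi_{\Eta_{\lambda_{PS}}}(\varphi)$.

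The decisive preliminary step is to record the elementary bound $\lambda_{PS}\le\beta$. This follows at once from \eqref{eq:ps} and $e^{-\lambda_{PS}G}\le1$, the latter because $G\ge0$ (valid since $\varphi\le1$ under the hypotheses of Theorem~\ref{thm:lambdaPIP}). Its purpose is that $\beta$ is then a \emph{legitimate} local stability constant for $\Eta_{\lambda_{PS}}$: the conditional intensity $\lambda_{PS}$ is dominated by $\beta$, so \eqref{eq:loc-s} holds with $c^*=\beta$. With this in hand I would apply Theorem~\ref{thm:bounds} to $\Xi=\Eta_{\lambda_{PS}}$ and $g=\varphi$ (admissible because $\varphi\colon\R^d\to[0,1]$ with $1-\varphi$ compactly supported, by inhibition and finite interaction range), taking intensity $\lambda=\lambda_{PS}$ and local stability constant $c^*=\beta$. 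This yields
\[
1-\lambda_{PS}G\ \le\ \Psi_{\Eta_{\lambda_{PS}}}(\varphi)\ \le\ 1-\tfrac{\lambda_{PS}}{\beta}\bigl(1-e^{-\beta G}\bigr).
\]
Substituting the identity $\Psi_{\Eta_{\lambda_{PS}}}(\varphi)=\lambda_{PS}/\beta$ and rearranging the two inequalities exactly as in the proof of Theorem~\ref{thm:lambdaPIP} produces \eqref{eq:bounds-lambda-PS}.

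The one subtle point, which I regard as the main obstacle, is the choice of the local stability constant: one must deliberately use the \emph{loose} constant $c^*=\beta$ rather than the natural tight value $c^*=\lambda_{PS}$. With $c^*=\lambda_{PS}$ the upper bound of Theorem~\ref{thm:bounds} collapses to the exact value $e^{-\lambda_{PS}G}$ and conveys no information, whereas the value $\beta$ reproduces the same interval that bounds the true intensity $\lambda$; verifying that $\beta$ is admissible is precisely what the inequality $\lambda_{PS}\le\beta$ guarantees. As a purely analytic cross-check (not needed for the proof), one may note that after writing $t=\lambda_{PS}G\ge0$ and using $\beta G=t e^{t}$, the two claimed bounds are equivalent to the scalar inequalities $1-t\le e^{-t}$ and $2e^{-t}-e^{-t(1+e^{t})}\le1$, the latter being provable by a short monotonicity argument; but the Gibbs-process route above avoids these computations entirely.
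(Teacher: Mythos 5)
Your proof is correct, but it follows a genuinely different route from the paper's. The paper proves \eqref{eq:bounds-lambda-PS} by pure calculus: it uses the closed form $\lambda_{PS}=W(\beta G)/G$ from \eqref{eq:lambda-PS} and establishes the scalar inequalities $x/(1+x)\le W(x)\le x/(2-e^{-x})$ for $x\ge0$, each obtained by rewriting a known logarithmic inequality in the form $u e^{u}\le x$ and applying the increasing function $W$ to both sides. You instead observe that $\Eta_{\lambda_{PS}}$ is itself a stationary Gibbs process with constant conditional intensity $\lambda_{PS}$, that the fixed-point equation \eqref{eq:ps} forces $\lambda_{PS}=\beta e^{-\lambda_{PS}G}\le\beta$, so that the deliberately loose constant $c^*=\beta$ is an admissible local stability constant, and you then feed $\Xi=\Eta_{\lambda_{PS}}$, $g=\varphi$ into Theorem~\ref{thm:bounds} and rearrange exactly as in Theorem~\ref{thm:lambdaPIP}, using $\Psi_{\Eta_{\lambda_{PS}}}(\varphi)=\lambda_{PS}/\beta$ from \eqref{eq:ps}. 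All steps check out: the Poisson process has intensity $\lambda_{PS}$, $1-\varphi$ has bounded support, and nothing in Theorem~\ref{thm:bounds} requires $c^*$ to be the minimal admissible constant, which is indeed the one subtle point you correctly isolate. Your route is conceptually attractive---it explains \emph{why} $\lambda$ and $\lambda_{PS}$ satisfy the identical bounds, namely both are $\beta$ times the value of a p.g.fl.\ of a process governed by Theorem~\ref{thm:bounds} with the same data $(\lambda\text{-role},\,c^*=\beta)$---and it requires no knowledge of Lambert's $W$ beyond the defining equation. What the paper's route buys is independence and economy: Proposition~\ref{lemma:PS} becomes a short, self-contained analytic fact that does not inherit the Stein's-method machinery behind Theorem~\ref{thm:bounds}, whereas your argument rests on the full strength of that theorem.
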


\begin{proof}
Since $\lambda_{PS}=W(\beta G)/G$, it suffices to show the following two inequalities:
\begin{equation}
\label{eq:W}
\frac{x}{1+x}\le W(x) \quad \text{and} \quad
W(x) \le \frac{x}{2-e^{-x}} 
\end{equation}
for all $x\ge0$. The first one follows from $x/(1+x)\le \log(1+x)$, see \citep[Eq.~4.1.33]{as64}, by transforming it to
\begin{equation*}
  \frac{x}{1+x} \exp \bigl( \frac{x}{1+x} \bigr) \leq x
\end{equation*}
and applying the increasing function $W$ on both sides. For the second inequality note that
\begin{equation}
\log(2-e^{-x})\le \frac{x}{2-e^{-x}}.
\end{equation}
This holds because we have equality for $x=0$ and it is straightforward to see that the derivative of the left hand side is less than or equal to the derivative of the right hand side for all $x\ge 0$. A similar transformation as above and applying $W$ on both sides again gives the second inequality in~\eqref{eq:W}.
\end{proof}

\section{Summary statistics}
\label{sec:s-stat}

For a stationary point process $\Xi$ the \emph{empty space function} or \emph{$F$ function} is defined as the cumulative distribution function of the distance from the origin to the nearest point in $\Xi$, i.e.
\begin{align*}
F(t)&=\P(\exists y\in \Xi \colon \|y\| \le t)=1-\P(\Xi(\BB(0,t))=0)\\
&=1-\E\Bigl(\prod_{y\in\Xi}\one\{y\notin \BB(0,t)\}\Bigr)=1-\Psi_\Xi(\one\{\cdot \notin \BB(0,t)\}),
\end{align*}
where $\BB(x,t)$ denotes the closed ball centred at $x\in\R^d$ with radius $t\ge0$. Thus for a locally stable Gibbs process $\Xi$ with constant $c^*$ we obtain from Theorem~\ref{thm:bounds}
\begin{equation}
\label{eq:boundsF}
\frac{\lambda}{c^*}\big(1-\exp(-c^*\alpha_dt^d)\big)\le F(t) \le \lambda \alpha_d t^d.
\end{equation}
Note that for a Poisson process with intensity $\lambda$ we may choose $c^*=\lambda$, in which case the lower bound in \eqref{eq:boundsF} is exact. A minor drawback of the bounds in \eqref{eq:boundsF} is that the intensity is in general not known and has to be estimated as well, e.g.\ by the methods of Section~\ref{sec:bounds-int}.

The \emph{nearest neighbour function} or \emph{$G$ function} is defined as the cumulative distribution function of the distance from a typical point of $\Xi$ (in the sense of the Palm distribution) to its nearest neighbour. For pairwise interaction processes $\Xi\sim\pip(\beta,\varphi)$ the $G$ function is computed in \citep[Sec.~5]{mase90} as
\begin{equation}
\label{eq:G}
G(t)=1-\frac{\beta}{\lambda}\E\Big(\prod_{y\in \Xi} \one\{y\notin \BB(0,t)\}\varphi(y)\Big)=1-\frac{\beta}{\lambda}\Psi_\Xi\big(\one\{\cdot \notin \BB(0,t)\}\varphi(\cdot)\big).
\end{equation}
Thus if $\Xi$ is inhibitory and has finite interaction range, setting $c^*=\beta$ in Theorem~\ref{thm:bounds} yields  
\begin{equation}
\label{eq:boundsG}
2-\frac{\beta}{\lambda}-\exp(-\beta \tilde{G}_t)\le G(t) \le 1-\frac{\beta}{\lambda}+\beta\tilde{G}_t,
\end{equation}
where $\tilde{G}_t=\int_{\R^d} [ 1-\varphi(x)\one\{\|x\|>t\} ] \,dx$. The left panel of Figure~\ref{fig:StraussK} shows these bounds for the hard annulus process from Example~\ref{ex:ringprozess} with parameters $\beta=70$, $r=0.025$ and $R=0.035$.

Let us furthermore assume that $\Xi$ is isotropic. Then the \emph{$K$ function} is defined as
\begin{equation*}
K(t)=\alpha_dd\int_0^ts^{d-1}\rho(s)\;ds,
\end{equation*}
where $\rho$ is the pair correlation function. By \eqref{eq:pcf}, \eqref{eq:corr-fun} and Theorem~\ref{thm:bounds} we obtain bounds on $\rho$ as
\begin{equation}
\label{eq:boundspcf}
\varphi(x)\biggl(\frac{\beta^2}{\lambda^2}-\frac{\beta^2\tilde{G}_x}{\lambda}\biggr)\le \rho(\|x\|) \le \varphi(x)\biggl(\frac{\beta^2}{\lambda^2}-\frac{\beta}{\lambda}\big(1-\exp(-\beta\tilde{G}_x)\big)\biggr),
\end{equation}
where $\tilde{G}_x=\int_{\R^d} [ 1-\varphi(y)\varphi(y-x) ] \;dy$, and (in most cases numeric) integration of \eqref{eq:boundspcf} yields bounds on the $K$ function.

\begin{example}
\label{ex:StraussK}
Let $\Xi$ be a Strauss process in two dimensions. Then
\begin{equation}
\tilde{G}_x=2\pi r^2(1-\gamma)-2 r^2 (1-\gamma)^2\Biggr(\mathrm{arccos}\biggl(\frac{\|x\|}{2r}\biggr)-\frac{\|x\|}{2r}\sqrt{1-\Bigl(\frac{\|x\|}{2r}\Bigr)^2}\Biggr);
\end{equation}
see also \citep{bn12}.
Since we do not know the true intensity of the Strauss process, we plug in the bounds of \eqref{eq:lambdaPIP} into \eqref{eq:boundspcf} to obtain bounds on the $K$ function. This procedure causes twice an error and therefore the estimates on $K$ are good only for smaller values of $\beta G$.
The right panel of Figure~\ref{fig:StraussK} shows these estimates for $\beta=40$, $r=0.05$ and $\gamma=0$.

\begin{figure}[ht]
\begin{center}

\caption{\label{fig:StraussK} \emph{Left:} $G$ function of the hard annulus process from Example~\ref{ex:ringprozess} with parameters $\beta=70$, $r=0.025$, $R=0.035$. \emph{Right:} $K$ function of a Strauss hard core process with parameters $\beta=40$, $r=0.05$. 
In both panels the solid line is an estimate of the true function based on 1,000 simulations, the dashed line is the  true function for a Poisson process (in the case of the $G$ function with the same intensity as the one obtained by simulation for the hard annulus process), and the grey area corresponds to the bounds computed in the text.}
\end{center}
\end{figure}

\end{example}

\section{Proof of Theorem~\ref{thm:bounds}}
\label{sec:proofs}

The main strategy of the proof is to replace in \eqref{eq:pgfl} the process $\Xi$ by a Poisson process $\Eta$, and then use Stein's method to bound the error
\begin{equation}
\label{eq:error}
\E\Big( \prod_{y\in \Xi} g(y) \Big)-\E\Big( \prod_{y\in \Eta} g(y) \Big).
\end{equation}

In the context of Poisson process approximation, Stein's method for bounding expressions of the form $\abs{\E f(\Xi)-\E f(\Eta)}$ uniformly in $f$ from a class of functions $\mcf$ was first introduced in \citep{bb92}. A nice exposition of the main body of the method with various ramifications and detailed proofs can be found in \citep{xia05}. More recent developments include the generalisation to Gibbs process approximation \cite{ss12}.

The central idea of Stein's method is to set up an equation of the form
\begin{equation}
\label{eq:stein}
  f(\xi) - \EE f(\Eta) = \msa h_f(\xi) \quad \text{for all $\xi \in \mfn$},
\end{equation}
where $\msa$ is an operator on a space of functions $h \colon \mfn \to \RR$ that characterises the distribution $Q$ of $\Eta$ in the sense that $\widetilde{\Eta}$ is $Q$-distributed if and only if $\EE \msa h(\widetilde{\Eta}) = 0$ for all functions~$h$. The hope is that a suitable choice of $\msa$ results in a right hand side whose expectation is much easier to bound. A particularly successful approach is the generator method by Barbour \citep{barbour88}, which suggests to choose for $\msa$ the infinitesimal generator of a Markov process whose stationary distribution is $Q$.

In the context of an approximation based on a Poisson process $\Eta_{\nu}$ with intensity $\nu > 0$ on a compact set $A$, the default choice is a spatial immigration-death process on $A$ with immigration rate $\nu$ and unit per-capita death rate.
This is a pure-jump Markov process on $\mfn \vert_A$ that holds any state $\eta \in \mfn \vert_A$ for an exponentially distributed time with mean $1/(\nu \abs{A} + \eta(A))$, where $\abs{A}$ denotes the Lebesgue measure of $A$; then a uniformly distributed point in $A$ is added with probability $\nu \abs{A}/(\nu \abs{A} + \eta(A))$, or a uniformly distributed point in $\eta$ is deleted with probability $\eta(A)/(\nu \abs{A} + \eta(A))$. Let $Z_{\xi}$ be such a process started at configuration $\xi$.
The operator $\msa$ then takes the form
\begin{equation}
\label{eq:generator}
  \msa h(\xi) = \int_{A} \bigl[ h(\xi+\delta_x)-h(\xi) \bigr]
  \hbit \nu \; dx+ \int_{A} \bigl[ h(\xi-\delta_x)-h(\xi) \bigr] \; \xi(d x).
\end{equation}

Immigration-death processes have many nice properties. In particular it is known that $Z_{\emptyset}(t)$ is a Poisson process with intensity $\nu (1-e^{-t})$ for every $t \geq 0$. Let $\{E_x\}_{x\in \xi}$ be i.i.d.\ exponentially distributed random variables with mean one and introduce the death process $D_\xi(t)=\sum_{x\in\xi}\one\{E_x>t\}\delta_x$.
Constructing $Z_{\emptyset}$ and $D_{\xi}$ independently on the same probability space, $Z_\xi$ can be represented as $Z_\xi(t) \eqinlaw Z_\emptyset(t)+D_\xi(t)$ for every $t \geq 0$; see \citep[Thm.~3.5]{xia05}.

We can then solve the Stein equation~\eqref{eq:stein}.
Taking $g \colon \RR^d \to [0,1]$ such that $A = \supp(1-g)$ is compact, consider the function $f \colon \mfn \to [0,1]$,
\begin{equation}
\label{eq:f}
f(\xi)=f(\xi \vert_A)=\prod_{y\in\xi}g(y).
\end{equation}

By \citep[Thm.~5.2]{xia05} the function $h_f \colon \mfn \to \RR$,
\begin{equation}
\label{eq:stein-sol}
h_f(\xi)=h_f(\xi \vert_A) = -\int_0^\infty \big[\E \bigl(f(Z_{\xi\vert_A}(t))\bigr)-\E(f(\Eta_\nu))\big]\;dt,
\end{equation}
is well-defined and satisfies~\eqref{eq:stein}. Thus
\begin{align}
\E f(\Xi)-\E f(\Eta_\nu) &= \E \msa h_f(\Xi) \nonumber \\
&= \E \int_{A} \bigl[ h_f(\Xi+\delta_x)-h_f(\Xi) \bigr]
\hbit \nu \; dx+ \E \int_{A} \bigl[ h_f(\Xi-\delta_x)-h_f(\Xi) \bigr] \; \Xi(d x) \nonumber \\
&= \E\int_{A}\big[h_f(\Xi+\delta_x)-h_f(\Xi)\big]\big( \nu-\lambda(x\mvert \Xi)\big)\; dx \nonumber \\
&= \E\int_{\RR^d}\big[h_f(\Xi+\delta_x)-h_f(\Xi)\big]\big( \nu-\lambda(x\mvert \Xi)\big)\; dx,
\label{eq:SteinGNZ}
\end{align}
where we applied the Georgii--Nguyen--Zessin equation on $\RR^d$ to the function $\bigl[(x,\xi) \mapsto \one_A(x) \bigl( h_f(\xi)-h_f(\xi+\delta_x) \bigr)\bigr]$ for obtaining the second equality.

Equation~\eqref{eq:SteinGNZ} is our starting point for further considerations.

\begin{proposition}
\label{prop:1}
Let $\Xi$ be a stationary Gibbs process with intensity $\lambda$ and conditional intensity $\lambda(\cdot\mvert \cdot)$. Let $g\colon {\R^d} \to [0,1]$ be a function such that $1-g$ has bounded support.  Then for all $\nu>0$
\begin{equation}
\label{eq:prop1}
\E\Big(\prod_{y\in\Xi}g(y)\Big)=1-\frac{\lambda}{\nu} \big(1-e^{-\nu G}\big)+I_\nu(g),
\end{equation}
where 
\begin{equation}
\label{eq:I}
I_\nu(g)=e^{-\nu G}\hbit\E\biggl( \int_0^1e^{\nu Gs}\Big(1-\prod_{y\in \Xi}\big(1-s(1-g(y))\big)\Big)\;ds \hspace*{0.4em} \cdot \, \int_{\R^d}(1-g(x))(\lambda(x\mvert \Xi)-\nu)\;dx\biggr).
\end{equation}
\end{proposition}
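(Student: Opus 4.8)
The plan is to start from the Stein identity \eqref{eq:SteinGNZ} and evaluate, in closed form, both the Poisson p.g.fl.\ $\E f(\Eta_\nu)$ and the Stein term on its right-hand side, exploiting throughout that $f(\xi)=\prod_{y\in\xi}g(y)$ is multiplicative. First I would record that, since $\Eta_\nu$ is a Poisson process of intensity $\nu$ and $1-g$ is supported on $A$,
\begin{equation*}
\E f(\Eta_\nu)=\exp\Bigl(-\nu\int_{\R^d}(1-g(x))\,dx\Bigr)=e^{-\nu G}.
\end{equation*}

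The crux is computing the increment $h_f(\xi+\delta_x)-h_f(\xi)$. Using the coupling $Z_\xi(t)\eqinlaw Z_\emptyset(t)+D_\xi(t)$ and adjoining one further independent death clock $E_x$ for the added point, multiplicativity of $f$ gives $f(Z_{\xi+\delta_x}(t))=f(Z_\xi(t))\,g(x)^{\one\{E_x>t\}}$. Taking expectations and using $\E\, g(x)^{\one\{E_x>t\}}=1-(1-g(x))e^{-t}$ yields
\begin{equation*}
\E f(Z_{\xi+\delta_x}(t))-\E f(Z_\xi(t))=-(1-g(x))\hbit e^{-t}\,\E f(Z_\xi(t)).
\end{equation*}
Integrating this against $dt$ over $[0,\infty)$ according to \eqref{eq:stein-sol} produces the key factorization
\begin{equation*}
h_f(\xi+\delta_x)-h_f(\xi)=(1-g(x))\,\Phi(\xi),\qquad \Phi(\xi):=\int_0^\infty e^{-t}\,\E f(Z_\xi(t))\,dt,
\end{equation*}
in which the $x$-dependence has separated off.

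Next I would evaluate $\Phi(\xi)$ explicitly. Again by the coupling and independence, $\E f(Z_\xi(t))=\E f(Z_\emptyset(t))\cdot\E f(D_\xi(t))$; since $Z_\emptyset(t)$ is Poisson with intensity $\nu(1-e^{-t})$ we have $\E f(Z_\emptyset(t))=\exp(-\nu(1-e^{-t})G)$, while $\E f(D_\xi(t))=\prod_{y\in\xi}(1-(1-g(y))e^{-t})$. The substitution $s=e^{-t}$ then gives $\Phi(\xi)=e^{-\nu G}\int_0^1 e^{\nu G s}\prod_{y\in\xi}(1-s(1-g(y)))\,ds$. Substituting the factorization into \eqref{eq:SteinGNZ}, pulling $\Phi(\Xi)$ out of the $dx$-integral, and writing $\prod=1-(1-\prod)$ splits $\Phi$ into a constant part $\Phi_0=e^{-\nu G}\int_0^1 e^{\nu Gs}\,ds=(1-e^{-\nu G})/(\nu G)$ and a variable part. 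The variable part, after the sign flip $(\nu-\lambda(x\mvert\Xi))\mapsto-(\lambda(x\mvert\Xi)-\nu)$, reproduces exactly $I_\nu(g)$. The constant part contributes $\Phi_0\,\E\int_{\R^d}(1-g(x))(\nu-\lambda(x\mvert\Xi))\,dx$; here stationarity gives $\E\lambda(x\mvert\Xi)=\lambda$, so by Fubini the integral equals $G(\nu-\lambda)$ and this term simplifies to $(1-e^{-\nu G})(\nu-\lambda)/\nu$. Adding $\E f(\Eta_\nu)=e^{-\nu G}$ and collapsing $e^{-\nu G}+(1-e^{-\nu G})(\nu-\lambda)/\nu=1-(\lambda/\nu)(1-e^{-\nu G})$ yields \eqref{eq:prop1}.

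I expect the main obstacle to be the rigorous justification of the increment computation — the coupling representation of $Z_{\xi+\delta_x}$ by an independent death clock, and the accompanying Fubini-type interchanges of the $dt$-, $ds$- and $dx$-integrals with the various expectations — since it is precisely the separation $h_f(\xi+\delta_x)-h_f(\xi)=(1-g(x))\Phi(\xi)$ that makes the whole expression collapse. The integrability needed for these interchanges follows from $0\le f\le1$, the compact support of $1-g$, and local finiteness of $\E\lambda(\cdot\mvert\Xi)$.
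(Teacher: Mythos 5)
Your proposal is correct and follows essentially the same route as the paper's proof: starting from the Stein identity \eqref{eq:SteinGNZ}, using the couplings $Z_{\xi+\delta_x} \eqinlaw Z_\xi + D_{\delta_x}$ and $Z_\xi \eqinlaw Z_\emptyset + D_\xi$ to factor the increment as $(1-g(x))\Phi(\xi)$, evaluating via the Poisson p.g.fl.\ and the substitution $s=e^{-t}$, and closing with $\E\lambda(x \mvert \Xi)=\lambda$. The only (cosmetic) difference is that you split $\Phi$ into constant and fluctuating parts after integrating in $t$, whereas the paper performs this split before; the computations are otherwise identical.
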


\begin{proof}
It is well known that for the Poisson process $\Eta_{\nu}$ we have
\begin{equation}
\label{eq:Pois-pgfl}
\E \Big(\prod_{y\in \Eta_\nu}g(y)\Big)=\exp\bigg(-\nu\int_{\R^d}1-g(x)\; dx\bigg)=e^{-\nu G};
\end{equation}
see e.g.\ \citep[Eq.~9.4.17]{dvj08}.
We then follow the main proof strategy laid out above in order to re-express
\begin{equation*}
  \E\Big(\prod_{y\in \Xi}g(y)\Big)-e^{-\nu G}.
\end{equation*}

Starting from Equation~\eqref{eq:SteinGNZ}, we may use the decomposition $Z_{\xi+\delta_x} \eqinlaw Z_\xi+D_{\delta_x}$ with independent $Z_\xi$ and $D_{\delta_x}$ to see that for any $\xi \in \mfn\vert_A$
\begin{align*}
h_f(\xi+\delta_x)-h_f(\xi)&=\int_0^\infty \E f(Z_\xi(t))-\E f(Z_{\xi+\delta_x}(t))\;dt \\
&=\int_0^\infty \E f(Z_\xi(t))-\E f(Z_{\xi}(t)+D_{\delta_x}(t))\;dt \\
&=(1-g(x))\int_0^\infty \E f(Z_\xi(t)) \hbit \P(D_{\delta_x}(t) \neq \emptyset)\;dt \\
&=(1-g(x))\int_0^\infty \E f(Z_\xi(t)) \hbit e^{-t}\;dt \\
&=(1-g(x))\int_0^\infty \E \big(f(Z_\emptyset(t))+(f(Z_\xi(t)) - f(Z_\emptyset(t))\big)  \hbit e^{-t}\;dt.
\end{align*} 
By further decomposing $Z_\xi \eqinlaw Z_\emptyset+D_\xi$ with independent $Z_{\emptyset}$ and $D_\xi$, we obtain
\begin{align*}
\E \big(f(Z_\xi(t)) - f(Z_\emptyset(t))\big)&=\E\Big(\prod_{y\in Z_\emptyset(t)}g(y)\Big)\hbit\E\Big(\prod_{y\in D_\xi(t)}g(y)-1\Big)\\
&=\exp\big(-\nu(1-e^{-t})G\big)\Big(\prod_{y\in \xi}\bigl(1-e^{-t}(1-g(y))\bigr)-1 \Big),
\end{align*}
where for the first expectation we used \eqref{eq:Pois-pgfl} and that $Z_\emptyset(t)$ is a Poisson process with intensity $\nu(1-e^{-t})$; for the second expectation note that each point of $\xi$ survives independently up to time $t$ with probability $e^{-t}$. Thus in total
\begin{equation*}
\begin{split}
h_f(&\xi+\delta_x)-h_f(\xi)\\[1mm]
&=(1-g(x))e^{-\nu G}\int_0^\infty\Bigl[ e^{\nu Ge^{-t}} + e^{\nu Ge^{-t}}\Bigl(\prod_{y\in \xi}\bigl(1-e^{-t}(1-g(y))\bigr) -1\Bigr)\Bigr]e^{-t}\;dt \\
&= (1-g(x))\frac{1-e^{-\nu G}}{\nu G} + (1-g(x)) e^{-\nu G}\int_0^1 e^{\nu Gs}\Bigl(\prod_{y\in \xi}\bigl(1-s(1-g(y))\bigr) -1 \Bigr)\;ds
\end{split}
\end{equation*}
by the substitution $s=e^{-t}$.

Plugging this into Equation~\eqref{eq:SteinGNZ} and using $\E \lambda(x\mvert \Xi)=\lambda$ finally yields
\begin{equation*}
\E\Big(\prod_{y\in \Xi}g(y)\Big)-e^{-\nu G}=\frac{\nu-\lambda}{\nu}\big(1-e^{-\nu G}\big)+I_\nu(g).
\end{equation*}
\end{proof}

\begin{proposition}
\label{prop:bounds-I}
Let $\Xi$ be a stationary locally stable Gibbs process with constant $c^*$. Then for all $0<\nu< c^*$
\begin{equation*}
\underline{I_{\nu}}(g)\le I_\nu(g)\le \overline{I_\nu}(g),
\end{equation*}
where
\begin{align}
\underline{I_{\nu}}(g)&= -\frac{1}{c^*-\nu}\big(c^*(1-e^{-\nu G})- \nu(1-e^{-c^*G})\big) \le 0,\label{eq:boundsI1}\\
\overline{I_\nu}(g)&= \frac{1}{\nu}\big(c^*(1-e^{-\nu G})- \nu(1-e^{-c^*G})\big)\ge 0.
\label{eq:boundsI2}
\end{align}
Furthermore $I_\nu(g) \le 0$ for all $\nu \ge c^*$.
\end{proposition}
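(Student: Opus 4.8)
The plan is, for each fixed $s\in[0,1]$, to bound the inner expectation appearing in \eqref{eq:I} and then to integrate in $s$. Abbreviating $P_s=\prod_{y\in\Xi}\bigl(1-s(1-g(y))\bigr)\in[0,1]$ and setting $\psi(s)=\E(P_s)$ (so $\psi(0)=1$ and $\psi(1)=\Psi_\Xi(g)$), Fubini's theorem lets me rewrite \eqref{eq:I} as
\begin{equation*}
I_\nu(g)=e^{-\nu G}\int_0^1 e^{\nu Gs}\,\E\Bigl[(1-P_s)\!\int_{\R^d}(1-g(x))\bigl(\lambda(x\mvert\Xi)-\nu\bigr)\,dx\Bigr]\,ds ,
\end{equation*}
which is licit since the inner integrand is bounded by $(c^*+\nu)G$. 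Because $1-P_s\ge0$ and $1-g\ge0$, the coefficient of $\lambda(x\mvert\Xi)$ in the inner integrand is non-negative pointwise, so replacing $\lambda(x\mvert\Xi)$ by its bounds $0$ and $c^*$ and using $\int_{\R^d}(1-g)=G$ together with $\E(1-P_s)=1-\psi(s)$ yields the elementary two-sided estimate
\begin{equation*}
-\nu G\,\bigl(1-\psi(s)\bigr)\ \le\ \E\Bigl[(1-P_s)\!\int_{\R^d}(1-g(x))\bigl(\lambda(x\mvert\Xi)-\nu\bigr)\,dx\Bigr]\ \le\ (c^*-\nu)G\,\bigl(1-\psi(s)\bigr).
\end{equation*}

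The crux of the argument, and the step I expect to be the main obstacle, is to control $1-\psi(s)$ sharply: the lazy bound $1-P_s\le1$ would decouple the two factors and produce a constant that is too large (it reproduces only the weaker estimate with $e^{-\nu G}$ in place of $e^{-c^*G}$). Instead I would show that $\psi$ obeys a differential inequality. Differentiating under the expectation (justified because $\Xi$ restricted to $\supp(1-g)$ is a.s.\ finite with $\E\,\Xi(\supp(1-g))\le c^*\abs{\supp(1-g)}<\infty$) gives
\begin{equation*}
-\psi'(s)=\E\Bigl(\sum_{x\in\Xi}(1-g(x))\prod_{y\in\Xi\setminus\{x\}}\bigl(1-s(1-g(y))\bigr)\Bigr)=\E\int_{\R^d}(1-g(x))\,P_s\,\lambda(x\mvert\Xi)\,dx ,
\end{equation*}
where the second equality is the Georgii--Nguyen--Zessin equation \eqref{eq:gnz} applied to $h(x,\xi)=(1-g(x))\prod_{y\in\xi}(1-s(1-g(y)))$, whose summand at $\xi=\Xi\setminus\{x\}$ is exactly the first displayed sum. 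Since the last integrand is non-negative and $\lambda(x\mvert\Xi)\le c^*$, this gives $0\le-\psi'(s)\le c^*G\,\psi(s)$. The left inequality shows $\psi$ is non-increasing, hence $\psi(s)\le\psi(0)=1$; the right inequality, i.e.\ $\tfrac{d}{ds}\log\psi(s)\ge-c^*G$, integrates (a Gronwall step) to $\psi(s)\ge e^{-c^*Gs}$. Together these give $0\le 1-\psi(s)\le 1-e^{-c^*Gs}$ for $0\le s\le1$.

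Substituting these bounds back finishes the proof. For $0<\nu<c^*$ the positivity of $e^{\nu Gs}$ and of $c^*-\nu$ turns the inner upper bound into
\begin{equation*}
I_\nu(g)\le e^{-\nu G}(c^*-\nu)G\int_0^1 e^{\nu Gs}\bigl(1-e^{-c^*Gs}\bigr)\,ds ,
\end{equation*}
and for the lower bound the factor $-\nu G<0$ forces me to use the \emph{same} upper estimate $1-\psi(s)\le 1-e^{-c^*Gs}$, giving $I_\nu(g)\ge -e^{-\nu G}\nu G\int_0^1 e^{\nu Gs}(1-e^{-c^*Gs})\,ds$. The remaining elementary integral is
\begin{equation*}
\int_0^1 e^{\nu Gs}\bigl(1-e^{-c^*Gs}\bigr)\,ds=\frac{e^{\nu G}-1}{\nu G}-\frac{e^{(\nu-c^*)G}-1}{(\nu-c^*)G},
\end{equation*}
and a direct simplification shows the two right-hand sides collapse exactly to $\overline{I_\nu}(g)$ and $\underline{I_\nu}(g)$; as a sanity check both are attained by the Poisson process of intensity $c^*$, for which $\lambda(x\mvert\Xi)\equiv c^*$ and $\psi(s)=e^{-c^*Gs}$, so every inequality above is an equality. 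Finally, for $\nu\ge c^*$ no integration of $\psi$ is needed: the inner upper bound $(c^*-\nu)G(1-\psi(s))$ is already $\le0$ since $c^*-\nu\le0$ and $1-\psi(s)\ge0$, so integrating against the non-negative weight $e^{\nu Gs}$ yields $I_\nu(g)\le0$ immediately.
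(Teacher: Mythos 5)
Your proof is correct, and it reaches the crucial intermediate estimate $\psi(s)=\E P_s \ge e^{-c^*Gs}$ by a genuinely different route than the paper. The paper obtains this bound from an external coupling result: citing Kendall and M{\o}ller, every locally stable Gibbs process restricted to the bounded set $A=\supp(1-g)$ is a dependent thinning of a dominating Poisson process $\Eta_{c^*}$, so that $\Xi\vert_A\subset\Eta_{c^*}$ a.s.; since each factor $1-s(1-g(y))$ lies in $[0,1]$, this gives $P_s\ge\prod_{y\in\Eta_{c^*}}\bigl(1-s(1-g(y))\bigr)$ pointwise, and taking expectations yields $\psi(s)\ge e^{-sc^*G}$ in one line. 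You instead prove the same inequality intrinsically: differentiate $\psi$ under the expectation, convert the resulting sum into an integral against $\lambda(\cdot\mvert\Xi)$ via the Georgii--Nguyen--Zessin equation, and bound $\lambda(x\mvert\Xi)\le c^*$ to get $-\psi'(s)\le c^*G\,\psi(s)$, which a Gronwall step integrates to $\psi(s)\ge e^{-c^*Gs}$. Your argument is self-contained, using only the GNZ equation (already the paper's basic tool) and local stability, whereas the paper's is shorter but imports perfect-simulation machinery and in exchange gets the stronger almost sure domination. The surrounding steps --- the two-sided bound on $\lambda(x\mvert\Xi)-\nu$ against the non-negative factor $1-P_s$, the Fubini interchange, the closing integral computation, and the trivial case $\nu\ge c^*$ --- coincide with the paper's, and your closed-form simplification to $\overline{I_\nu}(g)$ and $\underline{I_{\nu}}(g)$ checks out.

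Two small caveats, neither of which affects the validity of the proof. First, your sanity check is only half right: the Poisson process of intensity $c^*$ attains the upper bound but not the lower one. For that process $\int_{\R^d}(1-g(x))(\lambda(x\mvert\Xi)-\nu)\,dx=(c^*-\nu)G$, so the step where you replace $\lambda(x\mvert\Xi)-\nu$ by $-\nu$ is strict (when $G>0$); in fact $\underline{I_{\nu}}(g)$ is attained by no process, since equality would force $\lambda(\cdot\mvert\Xi)\equiv 0$ on $\supp(1-g)$, which makes $\psi\equiv1$ rather than $\psi(s)=e^{-c^*Gs}$. Second, writing the Gronwall step as $\frac{d}{ds}\log\psi(s)\ge-c^*G$ tacitly assumes $\psi(s)>0$; this does hold on $[0,1)$ because $P_s\ge(1-s)^{\Xi(A)}>0$ a.s., with $s=1$ recovered by continuity, but it is cleaner to avoid the logarithm altogether and integrate $\frac{d}{ds}\bigl(e^{c^*Gs}\psi(s)\bigr)\ge0$.
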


\begin{proof}
Since $-\nu \le \lambda(x\mvert \Xi)-\nu\le c^*-\nu$ a.s., we get for $\nu<c^*$ the upper bound
\begin{equation}
\label{eq:proof-b-I}
I_\nu(g)\le (c^*-\nu)e^{-\nu G}G  \int_0^1e^{\nu Gs}\Big(1-\E\prod_{y\in \Xi}\big(1-s(1-g(y))\big)\Big)\;ds , 
\end{equation}
and a similar lower bound, where $c^*-\nu$ is replaced by $-\nu$. Because $A=\supp(1-g)$ is bounded, $\Xi$ can be replaced by $\Xi\vert_A$ in \eqref{eq:proof-b-I}. It is a known fact that every locally stable Gibbs process on a bounded domain can be obtained as a dependent random thinning of a Poisson process; see \citep[Remark 3.4]{km00}. In particular, there exists a Poisson process $\Eta_{c^*}$ such that $\Xi\vert_A\subset \Eta_{c^*}$ a.s. Since $(1-s(1-g(y))\le 1$ for all $s\in [0,1]$ and for all $y\in \R^d$, we obtain
\begin{equation*}
1-\E\prod_{y\in \Xi}\bigl(1-s(1-g(y))\bigr) \le 1-\E\prod_{y\in \Eta_{c^*}}\bigl(1-s(1-g(y))\bigr) = 1-e^{-sc^*G},
\end{equation*}
where the last equality follows by \eqref{eq:Pois-pgfl}. Integrating and rearranging the terms yields the formulas \eqref{eq:boundsI1} and \eqref{eq:boundsI2}. If $\nu \ge c^*$, $I_\nu(g)$ is obviously non-positive.
\end{proof}
\vspace{4.5mm}

\noindent
\begin{proof}[Remainder of the proof of Theorem~\ref{thm:bounds}]
Propositions~\ref{prop:1} and~\ref{prop:bounds-I} yield the upper bounds
\begin{align*}
\E\Big(\prod_{y\in \Xi}g(y)\Big)&\le 1-\frac{\lambda}{\nu}\big(1-e^{-\nu G}\big)+\frac{c^*}{\nu}\big(1-e^{-\nu G}\big)-\big(1-e^{-c^* G}\big)\\
&=(c^*-\lambda)G\frac{1-e^{-\nu G}}{\nu G}+e^{-c^*G}
\end{align*}
for $0<\nu<c^*$ and 
\begin{equation*}
\E\Big(\prod_{y\in \Xi}g(y)\Big)\le 1-\lambda G\frac{1-e^{-\nu G}}{\nu G}
\end{equation*}
for $\nu \ge c^*$. Since the function $[x \mapsto (1-\exp(-x))/x]$
is monotonically decreasing for $x\ge 0$, we obtain the minimal upper bound for $\nu = c^{*}$, as
\begin{equation*}
\E\Big(\prod_{y\in \Xi}g(y)\Big)\le 1-\frac{\lambda}{c^*} \big(1-e^{-c^* G}\big).
\end{equation*}
For the lower bound recall the Weierstrass product inequality, which states
\begin{equation*}
\prod_{i=1}^n(1-a_i)\ge 1-\sum_{i=1}^na_i
\end{equation*}
for $0\le a_1,\dots,a_n\le 1$. Then, noting that the products below contain only finitely many factors $\neq 1$ by the boundedness of~$\supp(1-g)$, we have
\begin{align*}
\E \Big(\prod_{y\in \Xi}g(y)\Big)&=\E\Big(\prod_{y\in \Xi}\big(1-(1-g(y))\big)\Big)\\
&\ge 1-\E\sum_{y\in \Xi}(1-g(y))\\
&=1-\E\int_{\R^d}1-g(x)\; \Xi(dx)\\
&=1-\lambda\int_{\R^d}1-g(x)\;dx =1-\lambda G
\end{align*}
by Campell's formula; see \cite[Section~$9.5$]{dvj08}. 
\end{proof}

\begin{remark}
An alternative proof for the lower bound can be obtained by using Propositions~\ref{prop:1} and \ref{prop:bounds-I} in the analogous way as for the upper bound.
\end{remark}

\bibliographystyle{plain}

\end{document}